\documentclass[11pt]{article}

\usepackage[utf8]{inputenc}
\usepackage[a4paper,margin=1.08in]{geometry}
\usepackage{amsmath,amssymb,amsthm,mathtools}
\usepackage{microtype}
\usepackage{enumitem}
\usepackage{hyperref}
\usepackage[nameinlink,noabbrev]{cleveref}

\hypersetup{
  colorlinks=true,
  linkcolor=blue,
  citecolor=blue,
  urlcolor=blue,
  pdftitle={Completeness of the Model Space Does Not Force Regularity for Infinite-Dimensional Lie Groups},
  pdfauthor={Zongjian Han and Fungo Saluta},
  pdfsubject={Infinite-dimensional Lie groups and continuous inverse algebras},
  pdfkeywords={regular Lie group, semiregularity, BCH-Lie group, continuous inverse algebra, complete locally convex space, graded nil algebra}
}

\newtheorem{theorem}{Theorem}[section]
\newtheorem{proposition}[theorem]{Proposition}
\newtheorem{lemma}[theorem]{Lemma}
\newtheorem{corollary}[theorem]{Corollary}
\theoremstyle{remark}
\newtheorem{remark}[theorem]{Remark}

\title{Completeness of the Model Space Does Not Force Regularity for Infinite-Dimensional Lie Groups}
\author{%
Zongjian Han\textsuperscript{1,*,$\dagger$}
\quad
Fungo Saluta\textsuperscript{2,*,$\dagger$}\\[0.65em]
\small\textsuperscript{1}School of Mathematical Sciences, Tongji University,
Shanghai 200092, China\\
\small\textsuperscript{2}Department of Philosophy, Carnegie Mellon University,
Pittsburgh, PA 15213, USA
}
\date{}

\begin{document}
\maketitle

\begingroup
\renewcommand{\thefootnote}{\fnsymbol{footnote}}
\footnotetext[1]{Corresponding authors. E-mail addresses:
\href{mailto:dbln@tongji.edu.cn}{\texttt{dbln@tongji.edu.cn}} (Z. Han);
\href{mailto:fungol@andrew.cmu.edu}{\texttt{fungol@andrew.cmu.edu}} (F. Saluta).}
\endgroup

\begin{abstract}
We give negative solutions to the two basic completeness problems for locally
convex Lie groups: whether a Lie group modelled on a Mackey-complete locally
convex space must be regular, and whether it must at least possess a smooth
exponential function.  Both counterexamples have complete model spaces.  We
first construct a contractible complex analytic BCH--Lie group $H$, modelled
on the complete Silva space $\mathbb C^{(\mathbb N)}$, for which $\exp_H$ is a
global homeomorphism although $H$ is not $C^0$-semiregular.  The failure is
witnessed by smooth controls converging to zero in a fixed finite-dimensional
subspace; their evolutions exist uniquely on $[0,1)$ but have no endpoint at
time $1$.  The obstruction is a multiplicative graded escape in the principal
unit group of a complete continuous inverse algebra.  We then suspend one
such control: the translation action on $C^\infty(S^1,H)$ converts the
time-dependent obstruction into a single element of a semidirect-product Lie
algebra.  The resulting Lie group is modelled on a complete Hausdorff locally
convex space and contains an element which generates no one-parameter
subgroup.  Hence it admits no exponential function.  Thus completeness forces
neither non-autonomous evolution nor autonomous exponentiation.
\end{abstract}

\medskip
\noindent\textbf{2020 Mathematics Subject Classification.}
Primary 22E65; Secondary 46H05, 46A13, 16N40.

\smallskip
\noindent\textbf{Key words and phrases.}
Infinite-dimensional Lie group, Milnor regularity, semiregularity, BCH--Lie
group, continuous inverse algebra, strict LF-space, graded nil algebra.

\section{Introduction and main results}

\label{sec:introduction}

For a finite-dimensional Lie group, every element of the Lie algebra generates
a one-parameter subgroup, and every smooth time-dependent Lie-algebra curve
has a product integral.  The same conclusions hold for Banach--Lie groups by
the ordinary theory of differential equations.  Neither conclusion follows
from the axioms of a Lie group modelled on a general locally convex space:
smooth vector fields on such spaces need not possess integral curves.  Two
integration problems therefore lie at the foundation of infinite-dimensional
Lie theory.  The autonomous problem asks for an exponential function.  The
non-autonomous problem asks for evolution along arbitrary smooth
Lie-algebra curves.

Milnor placed the second problem at the centre of his 1983 Les Houches
lectures.  His definition of a regular Lie group requires the existence of
product integrals for all smooth curves together with smooth dependence of
the endpoint on the curve
\cite[Definition~7.6, p.~1046]{Milnor1984}.  He emphasized that regularity is
a strengthening of the existence and smoothness of the exponential map, and
observed that every Lie group then known was regular
\cite[p.~1046]{Milnor1984}.  The common completeness question emerging from
this framework was later separated by Neeb into two explicit problems:

\begin{enumerate}[label=\textup{(\roman*)},leftmargin=2.2em]
\item must every Lie group modelled on a Mackey-complete locally convex space
      possess a smooth exponential function;
\item must every such Lie group be regular?
\end{enumerate}

These are Problems~II.1 and II.2 of \cite{Neeb2006}; the second is explicitly
attributed there to Milnor.  More than four decades after Milnor's lectures,
the 2026 monograph of Gl\"ockner and Neeb still described a general
regularity or exponential-existence theorem for complete model spaces as a
fundamental open problem and the construction of a non-regular Lie group on a
Mackey-complete model as one of the central open problems of the theory
\cite[Introduction, p.~9; discussion preceding Proposition~6.3.2,
p.~458]{GlocknerNeebBook2026}.

The present paper gives negative answers to both questions, under the stronger
hypothesis that the model space is complete.  The two counterexamples are
linked by a suspension construction, but they exhibit sharply different
behaviour.  The first has a global exponential homeomorphism and fails only at
the level of time-dependent evolution.  The suspension converts this
non-autonomous failure into a single non-integrable Lie-algebra element and
thereby destroys the existence of an exponential function itself.

Let $G$ be a Lie group modelled on a Hausdorff locally convex space, with Lie
algebra $\mathfrak g:=\mathbf L(G)$.  In the standard left convention, an
evolution of $\xi\in C^k([0,1],\mathfrak g)$ is a curve $\Gamma$ satisfying
\[
  \Gamma(0)=e,
  \qquad
  \Gamma'(t)=T_eL_{\Gamma(t)}\xi(t).
\]
The group is $C^k$-semiregular if every such $\xi$ has an evolution, and it is
$C^k$-regular if the endpoint depends smoothly on $\xi$
\cite{Milnor1984,GlocknerSemiregularity,GlocknerNeebBook2026}.  Regularity
implies the existence of a smooth exponential function.  Semiregularity
already implies that every Lie-algebra element generates a one-parameter
subgroup.  We shall also use the equivalent right convention
\begin{equation}
  U(0)=e,
  \qquad
  U'(t)=T_eR_{U(t)}\gamma(t).
  \label{eq:right-evolution}
\end{equation}
Indeed, if $U'=\gamma U$ and $V:=U^{-1}$, then
\[
  V'=V(-\gamma).
\]

Our first main result settles the regularity problem at a level stronger than
non-regularity.

\begin{theorem}[A complete BCH--Lie group without semiregularity]
\label{thm:existence}
There exists a contractible complex analytic BCH--Lie group $H$, modelled on
the complete Silva space $\mathbb C^{(\mathbb N)}$, such that
\[
  \exp_H:\mathbf L(H)\longrightarrow H
\]
is a homeomorphism, while $H$ is not $C^0$-semiregular.  More precisely,
there exist a finite-dimensional subspace $W\subseteq\mathbf L(H)$ and smooth
curves
\[
  \gamma_\lambda\in C^\infty([0,1],W),
  \qquad 0<\lambda\leq1,
\]
with $\gamma_\lambda\to0$ in $C^\infty([0,1],W)$ as $\lambda\to0$, such that
\[
  U'(t)=\gamma_\lambda(t)U(t),
  \qquad U(0)=1,
\]
has no $H$-valued $C^1$ solution on $[0,1]$.  Each equation has a unique
smooth solution on $[0,1)$.
\end{theorem}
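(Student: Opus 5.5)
We will realize $H$ as the principal unit group $1+\mathfrak n$ of a complete continuous inverse algebra $A=\mathbb C1\oplus\mathfrak n$, where $\mathfrak n$ is a \emph{graded nil} (not nilpotent) algebra whose underlying space is $\mathbb C^{(\mathbb N)}$ with its finest locally convex topology. Concretely, take $\mathfrak n=\bigoplus_{k\ge1}\mathfrak n_k$, each $\mathfrak n_k$ finite-dimensional with $\mathfrak n_j\mathfrak n_k\subseteq\mathfrak n_{j+k}$, generated by a finite-dimensional degree-one space $W=\mathfrak n_1$, and arranged so that every single element is nilpotent but products of \emph{different} elements of $W$ do not vanish in general. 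Since $\mathbb C^{(\mathbb N)}$ carries the finest locally convex topology, every multilinear map on it is continuous, so multiplication is continuous and $A$ is a complete Silva-space algebra. Because each $x\in\mathfrak n$ is nilpotent, $1+x$ is invertible with inverse a finite Neumann sum, so $A^\times=\mathbb C^\times(1+\mathfrak n)$ is open. Inversion is a polynomial on each finite-dimensional step of the inductive limit, hence continuous and complex analytic, so $A$ is a continuous inverse algebra. Then $H:=1+\mathfrak n$ is a Lie group in the chart $x\mapsto 1+x$. Its Lie algebra is $\mathfrak n$ with the commutator bracket, and $\exp_H(x)=\sum x^n/n!$ and $\log(1+x)=\sum(-1)^{n+1}x^n/n$ are both finite sums. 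Both are analytic on each step, so $\exp_H$ is a homeomorphism (indeed biholomorphic). This yields the BCH property, and $H\cong\mathfrak n$ is contractible.

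The core step is the obstruction. For a smooth curve $\gamma$ with values in $W$, the right evolution is formally the Dyson series
\[
U(t)=1+\sum_{n\ge1}\int_{0\le t_n\le\dots\le t_1\le t}\gamma(t_1)\cdots\gamma(t_n)\,dt ,
\]
whose $n$-th term lies in $\mathfrak n_n$. On $[0,1)$ we choose $\gamma_\lambda$ so that for each $t<1$ only finitely many terms are nonzero. For example, let $\gamma_\lambda$ cycle through successive blocks $a_1,a_2,\dots\in W$ on intervals $[t_{k-1},t_k)$ accumulating at $1$, with smooth bump profiles of rapidly decaying size times $\lambda$. Then $U$ on $[0,t_k]$ is a finite product $\exp(c_k a_k)\cdots\exp(c_1a_1)$, which is a polynomial. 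The solution on $[0,1)$ is therefore smooth, and it is unique because the ODE in each finite-dimensional graded truncation $A/\bigoplus_{j>N}\mathfrak n_j$ is linear. The grading must then force $U(t)$ to have components in unboundedly high degree as $t\to1$. The relations should make the products $a_{i_1}\cdots a_{i_m}$ along increasing indices linearly independent; a free-algebra-style construction modulo the relations $x^{N(x)}=0$ is the candidate. Since a $C^1$ solution on $[0,1]$ would have compact image, and compact subsets of the strict LF-space $\mathbb C^{(\mathbb N)}$ lie in a finite-dimensional step, no endpoint can exist. This multiplicative graded escape is the mechanism. Scaling the bumps by $\lambda$ with all derivatives controlled gives $\gamma_\lambda\to0$ in $C^\infty([0,1],W)$ while preserving the escape: the degree-$n$ coefficient scales like $\lambda^n$ but stays nonzero.

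The main obstacle is building $\mathfrak n$. It must be nil, so that $\exp$ and inversion are polynomial and $H$ has a global exponential, yet not nilpotent along the specific ordered products produced by the controls. This is a Golod--Shafarevich/Kurosh-type tension, but with finite-dimensional $W$ we only need the specific ordered monomials to survive. I would first try defining $\mathfrak n$ as the quotient of the free algebra on $W$ by the graded ideal generated by $x^{d}$ for all $x$ of degree $k$ with $d=d(k)$ growing fast. I would then show by a dimension/Hilbert-series count that the chosen monomials remain nonzero in each degree. If that count fails, I would instead take countably many generators placed in increasing degrees inside a finite-dimensional $W$-image, checking nilness degreewise.
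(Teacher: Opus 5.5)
Your overall architecture matches the paper's: the finest locally convex topology on a countable-dimensional graded nil algebra, the principal unit group $1+\mathfrak n$, smooth pulses accumulating at $t=1$, and escape because bounded subsets lie in finite-dimensional subspaces. The gap is the step you call the main obstacle, which you leave open; it is exactly where the proof needs an idea.

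\textbf{The nil algebra and the surviving words.} Your candidate construction does not work as stated. Relations $x^{d(k)}=0$ for homogeneous $x$ only guarantee nilpotence of homogeneous elements. You need every element of $\mathfrak n$, inhomogeneous ones included, to be nilpotent for $1+\mathfrak n$ to be a group with global polynomial inverse and exponential. Achieving this while keeping the quotient infinite-dimensional is Golod's construction itself. Moreover, a Golod--Shafarevich count only gives infinite dimensionality; it does not show that monomials fixed in advance survive. Your requirement that all products along increasing indices be linearly independent is impossible anyway, since for fixed $m$ infinitely many of them lie in the finite-dimensional space $\mathfrak n_m$.

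The missing idea is that no tailoring is needed. Take any infinite-dimensional graded nil algebra generated by a finite set $S\subseteq A_1$, which Golod--Shafarevich supplies (the paper cites \cite{RegevRegev2009}). Then $A_n\neq0$ for every $n$. Apply K\"onig's lemma to the finitely branching tree of tuples $(a_n,\ldots,a_1)\in S^n$ with $a_n\cdots a_1\neq0$, where the parent is obtained by deleting the leftmost letter. This yields a single sequence $b_n\in S$ with $w_n=b_n\cdots b_1\neq0$ for all $n$. One nonvanishing word per degree is all the escape argument needs.

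\textbf{Two further repairs.} First, with your blocks $\gamma=\phi_k(t)a_k$ the endpoint is $\exp(c_na_n)\cdots\exp(c_1a_1)$. Its degree-$n$ component $\sum_{|m|=n}\prod_i (c_i^{m_i}/m_i!)\,a_n^{m_n}\cdots a_1^{m_1}$ contains many words besides $c_n\cdots c_1\,a_n\cdots a_1$, so its nonvanishing is not automatic. You would need, for instance, a genericity argument on the $c_i$. The paper instead makes the $n$-th pulse produce the affine factor $h_{n,\lambda}=1+\lambda\varepsilon_n\chi b_n$, whose logarithmic derivative $h'h^{-1}$ lies in $W=\operatorname{span}\{b^j\}$. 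Then the degree-$n$ part of $P_n(\lambda)$ is exactly $\lambda^n\varepsilon_1\cdots\varepsilon_nw_n\neq0$.

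Second, your claim that inversion, $\exp$ and $\log$ are single polynomials on each finite-dimensional step requires a uniform nilpotence index on finite-dimensional subspaces. Nilness gives only pointwise nilpotence. The paper obtains the uniform bound by a Baire argument (Lemma~\ref{lem:uniform-nilpotence}) before using the final topology.
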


Thus the autonomous integration of every Lie-algebra element can be globally
well behaved while arbitrarily small controls in one fixed finite-dimensional
space fail to evolve.  The example is contractible, complex analytic and
BCH; its exponential function is globally bijective with continuous inverse.
The obstruction is therefore independent of covering phenomena, global
topology, failure of BCH coordinates, or the absence of autonomous flows.  It
also has a strong local form: on the fixed space $W$, the domain of smooth
controls admitting evolution contains no neighbourhood of zero and is not
closed, and the endpoint map is discontinuous at zero; see
Corollary~\ref{cor:evolution-domain}.

The autonomous problem has a negative answer as well.

\begin{theorem}[A complete model space without an exponential function]
\label{thm:nonexponentiable}
There exists a smooth Lie group $\widetilde G$ modelled on a complete Hausdorff
locally convex space and an element $g\in\mathbf L(\widetilde G)$ for which
\[
  U'(t)=T_eL_{U(t)}g,
  \qquad U(0)=e,
\]
has no $C^1$ solution on $[0,1]$.  Hence $g$ generates no one-parameter
subgroup and $\widetilde G$ admits no smooth exponential function.  One may
take
\[
  \widetilde G=C^\infty(S^1,H)\rtimes_\alpha\mathbb R,
\]
where $H$ is supplied by Theorem~\ref{thm:existence} and $\alpha$ is the
translation action on the loop variable.
\end{theorem}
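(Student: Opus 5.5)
The plan is to suspend one non-evolvable control from Theorem~\ref{thm:existence}. Fix $\lambda$ and write $\gamma:=\gamma_\lambda\in C^\infty([0,1],W)$.

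\textbf{Step 1: make the control periodic.} Reparametrize by a smooth monotone map $\sigma\colon[0,1]\to[0,1]$ that is flat at both endpoints, and set $\tilde\gamma(t)=\sigma'(t)\gamma(\sigma(t))$. This curve vanishes to infinite order at $0$ and $1$, so it extends to a smooth $1$-periodic curve, i.e.\ an element of $C^\infty(S^1,\mathbf L(H))$. Evolutions transform under reparametrization: if $U$ solves $U'=\gamma U$, then $U\circ\sigma$ solves $V'=\tilde\gamma V$. Hence $\tilde\gamma$ has no evolution on $[0,1]$ either.

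\textbf{Step 2: build the group.} Since $H$ is modelled on the complete space $\mathbb C^{(\mathbb N)}$, the loop group $C^\infty(S^1,H)$ is a Lie group modelled on $C^\infty(S^1,\mathbb C^{(\mathbb N)})$, and I will check that this space is complete. Two routes are available: $C^\infty(S^1,E)\cong C^\infty(S^1)\widehat\otimes E$ with $E$ complete, or the general fact that $C^\infty(K,E)$ is complete when $E$ is complete. The action $\alpha_s(f)=f(\cdot+s)$ of $\mathbb R$ must be smooth as a map $C^\infty(S^1,H)\times\mathbb R\to C^\infty(S^1,H)$. I will verify this in charts via the exponential law on $C^\infty(S^1\times\mathbb R,\cdot)$, using that $H$ is BCH so that charts are given by $\exp_H$. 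This yields $\widetilde G$, modelled on the complete space $C^\infty(S^1,\mathbf L(H))\times\mathbb R$, with Lie algebra $C^\infty(S^1,\mathbf L(H))\rtimes\mathbb R$ and bracket involving $\partial_\theta$. Take $g:=(\tilde\gamma,1)$.

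\textbf{Step 3: transfer a solution back to $H$.} Suppose $U(t)=(F_t,s(t))$ solves $U'=T_eL_U g$ with $U(0)=e$. The $\mathbb R$-component gives $s(t)=t$. The loop component satisfies a left-evolution equation of the form
\[
\partial_tF_t=T_eL_{F_t}\,\alpha_t(\tilde\gamma)
\]
(or with $\alpha_{-t}$, depending on the convention for the semidirect product), with $F_0=1$. Evaluate at the base point $\theta=0$: point evaluation $\mathrm{ev}_0\colon C^\infty(S^1,H)\to H$ is a smooth homomorphism with differential $\mathrm{ev}_0$ on Lie algebras. So $\Gamma(t):=F_t(0)$ is a $C^1$ curve in $H$ with
\[
\Gamma'(t)=T_eL_{\Gamma(t)}\,\tilde\gamma(\pm t).
\]
Passing to inverses, as in the discussion after \eqref{eq:right-evolution}, and if necessary composing with $t\mapsto-t$ or $1-t$, this produces a right-evolution of $\tilde\gamma$ (or of a time-reversed or negated version) on $[0,1]$. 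Non-evolvability has to hold for the particular variant that arises, so I will choose the control accordingly. For instance, replace $\gamma$ by $-\gamma$ or by its time reversal before suspending, using whatever symmetry the construction of the $\gamma_\lambda$ provides; periodicity also allows shifting the starting point. The resulting contradiction shows that no $C^1$ solution exists. Then $g$ generates no one-parameter subgroup, so there is no exponential function.

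\textbf{Main obstacle.} I expect two points to need the most care. The first is the smoothness of $\alpha$ together with the completeness of the model space. The second is getting the signs and conventions right in Step 3, so that the curve obtained in $H$ is exactly one of the controls whose evolution is excluded by Theorem~\ref{thm:existence}. If time reversal does not preserve the obstruction, I will instead define the periodized control as $\theta\mapsto\tilde\gamma(-\theta)$, so that after the shift the evaluated equation is exactly $U'=\gamma U$ up to reparametrization and inversion.
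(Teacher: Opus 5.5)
Your architecture is the paper's: periodize a bad control, suspend it in $C^\infty(S^1,H)\rtimes_\alpha\mathbb R$, read off $z_t=t$ from the $\mathbb R$-component, and evaluate the loop component at the base point to get a forbidden evolution in $H$. The step that fails as written is Step~1.

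You show that an evolution $U$ of $\gamma$ gives the evolution $U\circ\sigma$ of $\widetilde\gamma=\sigma'\cdot(\gamma\circ\sigma)$. That is the implication ``$\gamma$ evolvable $\Rightarrow$ $\widetilde\gamma$ evolvable'', which is the wrong direction for concluding that $\widetilde\gamma$ has no evolution. The direction you need pulls an evolution $V$ of $\widetilde\gamma$ back along $\sigma^{-1}$. This requires $\sigma'>0$ on $(0,1)$, since mere monotonicity does not make $\sigma$ invertible. Moreover, because $\sigma$ is flat at the endpoints, $\sigma^{-1}$ is not differentiable at $0$ and $1$, so $V\circ\sigma^{-1}$ is a priori only continuous there. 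This is repairable: the proof of Theorem~\ref{thm:no-evolution} only uses that the unique solution on $[0,1)$ would extend continuously to $t=1$. The clean fix, however, is the paper's: Step~1 is unnecessary. The curve $\gamma_\lambda$ vanishes identically near $0$ (there $\chi'=0$), and all its derivatives vanish at $1$ by Lemma~\ref{lem:smooth-pulses}. So its $1$-periodic extension is already smooth (Lemma~\ref{lem:periodic-bad-control}).

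In Step~3 you should fix the convention instead of hedging, and resolve the hedge by choosing the loop, not by appealing to a symmetry of the $\gamma_\lambda$. Take $(F,z)(K,w)=(F\alpha_z(K),z+w)$ and $\alpha_z(F)([s])=F([s+z])$. Then formula \eqref{eq:left-translation-semidirect} gives $F_t'=T_{e_G}L_{F_t}(\mathbf L(\alpha_t)\widetilde\gamma)$. Evaluation at $[0]$ yields $u'=T_{e_H}L_u\,\widetilde\gamma([t])$ with no time reversal, and $u^{-1}$ is then a \emph{right} evolution of $-\widetilde\gamma$. So the loop to suspend is the periodization of $-\gamma_\lambda$, which is exactly the paper's choice $\gamma:=-\gamma_1$.

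If you suspended $\gamma_\lambda$ itself, you would need that $-\gamma_\lambda$ has no right evolution, and Theorem~\ref{thm:no-evolution} does not give this. The forced values would become $(1+\lambda\varepsilon_nb_n)^{-1}\cdots(1+\lambda\varepsilon_1b_1)^{-1}$. Each factor has components in all degrees below $\nu_{b_n}$, so the clean top-degree identity \eqref{eq:new-degree} is no longer available.

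Your remaining points match Proposition~\ref{prop:suspension-group}: completeness of $C^\infty(S^1,\mathbf L(H))$, smoothness of $\alpha$ via the exponential law, and $\operatorname{ev}_{[0]}$ intertwining left translations. Note that the global affine chart $a\mapsto 1+a$ of $H$ makes the chart computation simpler than using $\exp_H$.
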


The passage from Theorem~\ref{thm:existence} to
Theorem~\ref{thm:nonexponentiable} is the conceptual second step of the
paper.  Choose a bad control which is flat at both endpoints and regard it as
a smooth loop $\widetilde\gamma:S^1\to\mathbf L(H)$.  In the semidirect
product above, the single element $(\widetilde\gamma,1)$ records the entire
time-dependent control: the real coordinate advances the loop variable.  If
$(\widetilde\gamma,1)$ generated a one-parameter subgroup
$U(t)=(F_t,t)$, then evaluating $F_t$ at the fixed point $0\in S^1$ would
produce the forbidden evolution in $H$.  Suspension therefore turns a
non-autonomous obstruction into an autonomous one without imposing any new
incompleteness on the model space.

We now describe the algebraic source of the first counterexample.  Throughout,
algebras are associative complex algebras and need not be unital.  Let
\[
  A=\bigoplus_{n\geq1}A_n,
  \qquad A_iA_j\subseteq A_{i+j},
\]
be a positively graded nil algebra generated by its finite-dimensional
component $A_1$.  Endow its unitization
\begin{equation}
  \mathcal B_A:=\mathbb C1\oplus A
  \label{eq:unitization}
\end{equation}
with the finest Hausdorff locally convex topology, and write
\[
  G_A:=\mathcal B_A^\times,
  \qquad
  H_A:=1+A.
\]
The construction is summarized by the following theorem.

\begin{theorem}[Graded escape]
\label{thm:graded-escape}
Let $A=\bigoplus_{n\geq1}A_n$ be an infinite-dimensional positively graded
complex nil algebra generated by its finite-dimensional component $A_1$.
Then $\mathcal B_A$ is a complete complex continuous inverse algebra.  The
principal unit group $H_A=1+A$ is a contractible complex analytic BCH--Lie
group, modelled on the complete Silva space
$A\cong\mathbb C^{(\mathbb N)}$, and
\[
  \exp_{H_A}:A\longrightarrow H_A
\]
is a homeomorphism.  The full unit group $G_A=\mathcal B_A^\times$ is also a
complex analytic BCH--Lie group.

There exist a finite-dimensional subspace $W\subseteq A=\mathbf L(H_A)$ and
curves
\[
  \gamma_\lambda\in C^\infty([0,1],W),
  \qquad 0<\lambda\leq1,
\]
such that $\gamma_\lambda\to0$ in $C^\infty([0,1],W)$ as $\lambda\to0$,
whereas
\begin{equation}
  U'(t)=\gamma_\lambda(t)U(t),
  \qquad U(0)=1,
  \label{eq:bad-ivp-intro}
\end{equation}
has no $C^1$ solution with values in $H_A$ or in $G_A$.  It has a unique
smooth $H_A$-valued solution on $[0,1)$.  Consequently, $H_A$ and $G_A$ are
not $C^k$-semiregular for any $k\in\mathbb N\cup\{\infty\}$.
\end{theorem}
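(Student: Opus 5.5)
The plan has three parts: a structural part (topology and polynomiality), a Lie-theoretic part read off from it, and an explicit construction of the bad controls by stacking rescaled exponentials.

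\emph{Structural part.} Since $A$ is generated by the finite-dimensional space $A_1$, it has countable dimension, so $A\cong\mathbb C^{(\mathbb N)}$ and $\mathcal B_A$, with the finest locally convex topology, is a complete Silva space. It is the strict inductive limit of its finite-dimensional subspaces, and so are its finite products. Hence multilinear maps are continuous. Moreover, a map on an open subset is continuous, or complex analytic, as soon as its restrictions to all finite-dimensional subspaces are.

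The key algebraic input is a \emph{uniform nil index}. Let $F\subseteq A$ be finite-dimensional and put $Z_m:=\{a\in F: a^m=0\}$. These sets are Zariski-closed, increasing in $m$, and their union is $F$. By the Baire property (or irreducibility of $F$), some $Z_m$ equals $F$. Consequently the following maps are polynomial on $F$:
\begin{itemize}[leftmargin=1.5em]
\item $(1+a)^{-1}=\sum_{n<m}(-a)^n$;
\item $\exp(a)$ and $\log(1+a)$;
\item on finite-dimensional subspaces of $A\times A$, the BCH product $\log(\exp x\exp y)$, since $\exp x\exp y-1$ again lies in a finite-dimensional subspace.
\end{itemize}
Inversion on $\mathcal B_A^\times$ reduces to this case by writing $z(1+a)=z\bigl(1+z^{-1}a\bigr)$ for $z\in\mathbb C^\times$. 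Therefore $\mathcal B_A$ is a continuous inverse algebra, and $G_A$ and $H_A$ are complex analytic groups. The maps $\exp\colon A\to 1+A$ and $\log$ are mutually inverse, so $\exp_{H_A}$ is an analytic homeomorphism and BCH coordinates exist. Contractibility follows because $H_A$ is an affine translate of $A$.

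\emph{Construction of the controls.} By assumption $A$ is infinite-dimensional and generated by $A_1$, so it contains nonzero monomials of every length in a fixed basis $e_1,\dots,e_d$ of $A_1$. By K\"onig's lemma there is an infinite sequence $a_1,a_2,\ldots$ in $\{e_1,\dots,e_d\}$ all of whose finite words $a_k\cdots a_1$ are nonzero; take $W:=A_1$.

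Fix the intervals $I_k=[1-2^{-k+1},1-2^{-k}]$ and a smooth bump $\beta_k$ supported in $I_k$ with $\int\beta_k=1$. Define
\[
\gamma_\lambda(t)=\sum_k c_k\beta_k(t)\,a_k .
\]
On $I_k$ the equation $U'=\gamma_\lambda U$ is a scalar-times-fixed-element equation. So at $t_k=1-2^{-k}$ the solution is
\[
U(t_k)=\exp(c_ka_k)\cdots\exp(c_1a_1),
\]
and on $[0,1)$ it is a smooth $H_A$-valued curve built from finitely many exponentials. It is unique because the graded components satisfy a triangular recursion $U_n'=\gamma_\lambda U_{n-1}$.

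The constants are chosen inductively with $|c_k|\le\lambda e^{-k^2}$. This beats the derivative growth $\|\beta_k^{(j)}\|_\infty\lesssim 2^{k(j+1)}$, so $\gamma_\lambda\to0$ in $C^\infty([0,1],W)$ as $\lambda\to0$. The degree-$k$ component of $U(t_k)$ is a polynomial in $(c_1,\dots,c_k)$, and its coefficient of $c_1\cdots c_k$ is the nonzero word $a_k\cdots a_1$. Hence it is not identically zero, and we may pick $c_k$ off its zero set inside the allowed disc.

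\emph{Escape, and the main obstacle.} Suppose a $C^1$ solution existed on $[0,1]$ with values in $H_A$. Then $U([0,1])$ is compact in a Silva space, hence contained in some finite sum $\mathbb C1\oplus\bigoplus_{n\le N}A_n$. But $U(t_k)$ has a nonzero degree-$k$ component for every $k>N$, a contradiction.

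For $G_A$, the scalar part $u$ of a solution satisfies $u'=0$ and $u(0)=1$, since $\gamma_\lambda$ takes values in $A$. So a $G_A$-valued solution would already lie in $H_A$. Passing to the left convention via $V=U^{-1}$, as explained after \eqref{eq:right-evolution}, gives the failure of $C^k$-semiregularity for every $k$, because the $\gamma_\lambda$ are smooth.

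The step I expect to be the main obstacle is the degree-$k$ nonvanishing with uniform control: one must ensure that the cross terms (such as $c_1^k a_1^k/k!$) do not force cancellation while keeping the $c_k$ small enough for $C^\infty$ convergence. The generic-choice argument above is my first attempt at handling it; the uniform nil-index argument is the other delicate point.
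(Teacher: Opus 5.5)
\paragraph{Where the proof breaks.} Most of your proposal is sound and parallels the paper. This includes the uniform nil index via Baire, the polynomial formulas on finite-dimensional subspaces combined with the final topology, and the escape via compact images lying in finitely many degrees. Your uniqueness argument through the recursion $\pi_n(U)'=\gamma_\lambda\,\pi_{n-1}(U)$, which works because your controls take values in $A_1$, is a valid substitute for the paper's $(V^{-1}U)'=0$. (Two small points: you need $z1+a=z(1+z^{-1}a)$, and the BCH property of $G_A$ still needs a line, e.g.\ via $\exp(z1+a)=e^z\exp(a)$.)

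The genuine gap is the one you suspected: the inductive choice of $c_k$. At step $k$ the numbers $c_1,\dots,c_{k-1}$ are already fixed. What you must avoid is the zero set of the one-variable polynomial
\[
  q_k(c)=\pi_k\bigl(\exp(ca_k)\,U(t_{k-1})\bigr)
  =\sum_{m=0}^{k}\frac{c^m}{m!}\,a_k^m\,\pi_{k-m}\bigl(U(t_{k-1})\bigr).
\]
The $k$-variable polynomial has the nonzero coefficient $a_k\cdots a_1$ at $c_1\cdots c_k$, but this says nothing about the specialization $q_k$. Your inductive hypothesis only gives $\pi_{k-1}(U(t_{k-1}))\neq0$. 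That element is a combination of $w_{k-1}$ with other words, e.g.\ $\tfrac12c_1^2c_3\cdots c_{k-1}\,a_{k-1}\cdots a_3a_1^2$. Left multiplication by $a_k$ can annihilate such a combination even though $a_kw_{k-1}\neq0$. Nothing prevents the other coefficients $a_k^m\pi_{k-m}(U(t_{k-1}))$ from vanishing too. If $q_k\equiv0$, no admissible $c_k$ exists and the construction stops. So the cross terms are not controlled as written.

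\paragraph{How to close it.} The paper avoids cross terms entirely. It does not integrate a scalar multiple of a fixed $b_n$, which yields $\exp(c_nb_n)$. Instead it takes the pulse $h_{n,\lambda}(s)=1+\lambda\varepsilon_n\chi(s)b_n$ with control $h_{n,\lambda}'h_{n,\lambda}^{-1}$. The transfer across $I_n$ is then exactly $1+\lambda\varepsilon_nb_n$, which has only degrees $0$ and $1$. Hence $\pi_n(P_n(\lambda))=\lambda^n\varepsilon_1\cdots\varepsilon_nw_n$ exactly, and no genericity is needed. The price is that $h'h^{-1}$ involves the powers $b_n^j$, so $W$ must be the span of the nonzero powers of elements of $S$ rather than $A_1$.

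Alternatively, you can keep your scheme and repair the choice.
\begin{itemize}
\item Extend $S$ to a homogeneous basis of $A$. Then the coordinates of $\pi_k(U(t_k))$ are polynomials in $c_1,\dots,c_k$ with coefficients in the countable field $K$ generated over $\mathbb Q$ by the structure constants. At least one of these polynomials is nonzero, because of the coefficient $w_k$ at $c_1\cdots c_k$.
\item Choose each $c_k$ in its disc transcendental over the countable field $K(c_1,\dots,c_{k-1})$. Then $c_1,\dots,c_k$ are algebraically independent over $K$, so no nonzero such polynomial vanishes at them.
\end{itemize}
A Baire argument on the product of the discs also works.
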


\begin{corollary}[Continuous inverse algebra consequence]
\label{cor:cia-consequence}
There exists a complete complex continuous inverse algebra whose full unit
group is a complex analytic BCH--Lie group and is not $C^0$-semiregular.  This
algebra does not satisfy the Gl\"ockner--Neeb multiplication-growth
condition.
\end{corollary}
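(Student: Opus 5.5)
The corollary is a specialization of Theorem~\ref{thm:graded-escape}, with one extra point to check: the multiplication-growth condition. The plan has two steps.

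\textbf{Step 1: obtain the algebra.} We first need one infinite-dimensional, positively graded, complex nil algebra $A$ that is generated by a finite-dimensional $A_1$. The natural source is a Golod--Shafarevich algebra over $\mathbb C$. Take the free algebra on finitely many generators of degree one. Quotient it by an ideal generated by homogeneous relations whose degree counts satisfy the Golod--Shafarevich inequality. Choose these relations so that, for every element of the (countable-dimensional) algebra, some power lies in the ideal. The standard construction enumerates a basis, takes homogeneous components of powers, and adds them as relations. Over an uncountable field one makes the nil property work by the usual Golod argument on homogeneous components of $x^N$. The quotient is then graded, generated in degree one, nil, and infinite-dimensional.

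Applying Theorem~\ref{thm:graded-escape} to this $A$ gives three things:
\begin{itemize}
\item $\mathcal B_A$ is a complete complex continuous inverse algebra;
\item $G_A=\mathcal B_A^\times$ is a complex analytic BCH--Lie group;
\item $G_A$ is not $C^k$-semiregular for any $k$, and in particular not $C^0$-semiregular.
\end{itemize}
This settles the first sentence of the corollary.

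\textbf{Step 2: failure of the growth condition.} The cleanest route is contrapositive. The Gl\"ockner--Neeb theorem says that a Mackey-complete continuous inverse algebra satisfying their multiplication-growth condition has a $C^0$-regular, and hence $C^0$-semiregular, unit group. Since $\mathcal B_A$ is complete and its unit group is not semiregular, it cannot satisfy the condition.

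If one wants a direct check instead, take the condition in its usual form. For every continuous seminorm $p$ there should exist a continuous seminorm $q$ and $C>0$ with $p(x_1\cdots x_n)\le C^n q(x_1)\cdots q(x_n)$ for all $n$. To violate it, choose $p$ on the finest locally convex topology as a seminorm weighting the degree-$n$ component by factors $M_n$ that grow superexponentially. Then evaluate products of elements of $A_1$, which span $A_n$ because $A$ is generated by $A_1$.

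\textbf{Main obstacle.} The only delicate point is matching the exact formulation of the growth condition and the hypotheses of the cited regularity theorem, namely Mackey completeness and the form of the estimate. I would rely on the contrapositive argument and cite the Gl\"ockner--Neeb result, which avoids recomputing seminorm estimates.
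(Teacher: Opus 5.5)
Your Step~1 is the paper's argument: the algebra comes from Theorem~\ref{thm:golod}, and Theorem~\ref{thm:graded-escape} supplies three things. These are completeness of $\mathcal B_A$, the complex analytic BCH structure of $G_A$, and the failure of $C^k$-semiregularity for every $k$. Your sketch of the Golod construction is imprecise. Making enumerated basis elements nilpotent does not make the algebra nil; over $\mathbb C$ one must make every element of each finite-dimensional span nilpotent at once, for instance by imposing the coefficients of $x^N$ for a generic $x$. But, as in the paper, citing the existence theorem suffices.

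The genuine difference is in Step~2. The paper never invokes the Gl\"ockner--Neeb regularity theorem. Proposition~\ref{prop:growth} extends the branch words $w_n\in A_n$ to a Hamel basis and sets $p(w_n)=e^{n^2}$. This $p$ is continuous because every seminorm is continuous for the finest locally convex topology, and it gives $\|\mu_n\|_{p,q}\geq M_q^{-n}e^{n^2}$ for every $q$.

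Your contrapositive is also sound, and the formulation issue you flag is harmless. $\mathcal B_A$ is complete, so it meets any completeness hypothesis of their theorem. $G_A$ fails $C^k$-semiregularity for every $k$, so whichever regularity order their theorem yields gives a contradiction.

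Your fallback direct check is essentially the paper's proof, with one correction. You must calibrate the weights on actual nonzero products of generators, not on an arbitrary norm of $A_n$: for example, set $p(w)=M_n$ for one nonzero product $w$ of $n$ basis elements of $A_1$. Such a product exists because $A_n\neq0$ for all $n$. With an arbitrary norm, products could have very small norm and cancel the growth of $M_n$.

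As for what each route buys: the contrapositive shows that the second sentence of the corollary follows formally from the first together with the Gl\"ockner--Neeb theorem. So any complete continuous inverse algebra with non-semiregular unit group automatically answers their question. The paper's route is self-contained and shows that the failure of the growth condition is elementary here. It uses only the finest topology and infinitely many nonzero homogeneous degrees, with no appeal to nilness or to the evolution analysis of Section~\ref{sec:control}.
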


Gl\"ockner and Neeb proved regularity of the unit group of a Mackey-complete
continuous inverse algebra under their multiplication-growth condition and
asked whether that condition is automatic under Mackey completeness
\cite[Remark~E(b)]{GlocknerNeebCIA}.  Corollary~\ref{cor:cia-consequence}
gives a negative answer even for complete algebras.

The mechanism behind Theorem~\ref{thm:graded-escape} is purely
multiplicative.  Every continuous curve on a compact interval has bounded
image in $\mathcal B_A$, and every bounded subset of this finest locally
convex space is contained in a finite-dimensional subspace.  Linear
integration is therefore available.  The controls themselves remain in the
fixed finite-dimensional space $W$.  What leaves every finite-dimensional
stage is the ordered product generated by evolution.

More precisely, one extracts from a finite subset of $A_1$ a sequence
$b_1,b_2,\ldots$ such that
\begin{equation}
  w_n:=b_nb_{n-1}\cdots b_1\neq0
  \qquad(n\geq1).
  \label{eq:word-intro}
\end{equation}
For rapidly decreasing $\varepsilon_n>0$, the forced endpoints at times
$t_n=1-2^{-n}$ are
\[
  P_n(\lambda)
  =(1+\lambda\varepsilon_nb_n)\cdots
   (1+\lambda\varepsilon_1b_1).
\]
Their degree-$n$ components satisfy
\begin{equation}
  \pi_n(P_n(\lambda))
  =\lambda^n\varepsilon_1\cdots\varepsilon_nw_n\neq0.
  \label{eq:top-degree-intro}
\end{equation}
If $(P_n(\lambda))$ converged, then it would be bounded and hence lie in
a finite-dimensional subspace.  Such a subspace involves only finitely many
homogeneous degrees, contradicting Equation~\eqref{eq:top-degree-intro}.
The pulse amplitudes decay fast enough for the control to extend smoothly to
the accumulation time, with all derivatives vanishing there.  Each finite
stage is therefore a valid smooth evolution, while finite-time accumulation
creates a new nonzero homogeneous degree at every step.  This is the graded
escape.

Golod--Shafarevich algebras provide the required infinite-dimensional graded
nil algebras and may be chosen two-generated.  Within the class considered
here, Section~\ref{sec:consequences} proves a sharp dichotomy: finite
dimensionality, $C^0$-semiregularity, Milnor regularity of both $H_A$ and
$G_A$, and the Gl\"ockner--Neeb multiplication-growth condition are
equivalent.

The paper is organized as follows.  Section~\ref{sec:algebra} constructs the
nonvanishing infinite word.  Section~\ref{sec:cia} establishes the complete
locally convex algebra and Lie-group structures.  Section~\ref{sec:control}
builds the smooth finite-dimensional pulse family and proves the absence of a
global evolution.  Section~\ref{sec:consequences} derives the instability and
classification results.  The final section carries out the suspension and
proves Theorem~\ref{thm:nonexponentiable}.

\section{The graded nil algebra input}
\label{sec:algebra}

The homogeneous component $A_n$ is an image of $A_1^{\otimes n}$; hence
\begin{equation}
  \dim A_n\leq(\dim A_1)^n<\infty.
  \label{eq:automatic-finite-dimensionality}
\end{equation}
In particular, $A$ and $\mathcal B_A$ are countable-dimensional.

We recall this form of K\"onig's lemma.

\begin{lemma}[K\"onig]
\label{Konig}
  
Let $\mathcal{T}$ be an infinite rooted tree such that every vertex has finite degree.
Then $\mathcal{T}$ contains an infinite branch.

\end{lemma}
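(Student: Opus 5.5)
The plan is to build the branch greedily, one vertex at a time, while keeping the following invariant: the current vertex has infinitely many descendants.

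Write $r$ for the root. For a vertex $v$, let $\mathcal T_v$ be the subtree of $v$ and all its descendants, that is, the vertices whose unique path to $r$ passes through $v$. The construction starts with $v_0:=r$. Then $\mathcal T_{v_0}=\mathcal T$, which is infinite by hypothesis.

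For the inductive step, suppose $v_n$ has been chosen with $\mathcal T_{v_n}$ infinite. The vertex $v_n$ has only finitely many children $c_1,\dots,c_k$, because its degree is finite. We have
\[
  \mathcal T_{v_n}=\{v_n\}\cup\mathcal T_{c_1}\cup\dots\cup\mathcal T_{c_k}.
\]
A finite union of finite sets is finite, so some $\mathcal T_{c_i}$ must be infinite; in particular $k\geq1$. Set $v_{n+1}:=c_i$ for such an $i$.

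This recursion, formally using dependent choice (or a fixed enumeration of the countably many vertices in applications), produces a sequence $v_0,v_1,v_2,\ldots$. Each $v_{n+1}$ is a child of $v_n$, so the vertices are pairwise distinct, since depth strictly increases. The sequence is therefore an infinite branch starting at the root.

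The only point needing care is the decomposition of $\mathcal T_{v_n}$ displayed above. It holds because every descendant of $v_n$ other than $v_n$ itself passes through exactly one child of $v_n$. Finite degree enters precisely there, to make the union finite. There is no real obstacle beyond this.

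In the paper's application, the tree will consist of nonzero words $b_n\cdots b_1$ with letters from a finite spanning set of $A_1$. This tree is infinite because $A$ is infinite-dimensional and generated by $A_1$, and it is finitely branching because the spanning set is finite. The lemma then yields the sequence in \eqref{eq:word-intro}.
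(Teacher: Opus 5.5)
Your proof is correct and is essentially the paper's argument: starting at the root, recursively choose a child whose descendant subtree is infinite, which must exist because the finitely many children's subtrees together with the current vertex make up the infinite subtree. You spell out that decomposition, the distinctness of the chosen vertices, and the use of dependent choice more explicitly than the paper, which only sketches them.
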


\begin{proof}

Starting at the root, choose recursively a child whose descendant subtree is
infinite.  Such a child exists at every stage because the current vertex has
only finitely many children, whereas its descendant subtree is infinite.  The
chosen vertices form an infinite branch.

\end{proof}

\begin{lemma}[Nonvanishing branch]
\label{lem:branch}
Let $A=\bigoplus_{n\geq1}A_n$ be a infinite-dimensional positively graded
algebra generated by a finite set $S\subseteq A_1$.  Then there is a sequence
$b_1,b_2,\ldots\in S$ such that
\[
  b_n\cdots b_1\neq0
  \qquad(n\geq1).
\]
\end{lemma}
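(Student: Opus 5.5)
The plan is to apply K\"onig's lemma (Lemma~\ref{Konig}) to the tree of nonvanishing left-extended words in the generators.

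Let $\mathcal T$ be the rooted tree whose vertices at level $n\geq0$ are the finite sequences $(s_1,\dots,s_n)\in S^n$ with $s_n\cdots s_1\neq0$. The root is the empty sequence. The parent of $(s_1,\dots,s_n)$ is $(s_1,\dots,s_{n-1})$. This is well defined because $s_n\cdots s_1\neq0$ forces $s_{n-1}\cdots s_1\neq0$: if the shorter product vanished, multiplying it on the left by $s_n$ would give zero. Hence $\mathcal T$ is a rooted tree. Every vertex has at most $|S|<\infty$ children, so every vertex has finite degree.

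Next I show that $\mathcal T$ is infinite. Since $S$ generates $A$ and $S\subseteq A_1$, the homogeneous component $A_n$ is spanned by the products $s_n\cdots s_1$ with $s_i\in S$. If $\mathcal T$ were finite, it would have bounded height $N$. Then every product of length $N+1$ in elements of $S$ would vanish, so $A_{N+1}=0$. Since every product of length $m>N+1$ has a factor of length $N+1$, also $A_m=0$ for all $m>N$. Consequently $A=A_1\oplus\cdots\oplus A_N$. Each summand is finite-dimensional by \eqref{eq:automatic-finite-dimensionality}, which contradicts the infinite-dimensionality of $A$.

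Lemma~\ref{Konig} then supplies an infinite branch. Its vertices at level $n$ are $(b_1,\dots,b_n)$ for a single sequence $(b_i)$ in $S$, and each satisfies $b_n\cdots b_1\neq0$. The only point needing care is the orientation: the tree must be built by extending words on the left. That choice is what makes prefixes of nonzero words nonzero and so makes the parent map well defined. Everything else is routine.
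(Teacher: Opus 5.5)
Your proposal is correct and follows essentially the same route as the paper. Both build the tree of nonvanishing words extended on the left and use the parent-is-nonzero observation. Both rule out a finite tree by noting that vanishing products beyond some length would force $A_m=0$ for large $m$, making $A$ finite-dimensional via \eqref{eq:automatic-finite-dimensionality}, and then apply K\"onig's lemma.
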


\begin{proof}
If $A_N=0$ for some $N$, then every product of $m\geq N$ elements of $S$
vanishes, because its rightmost block of length $N$ lies in $A_N$.  Since $S$
generates $A$, this gives $A_m=0$ for all $m\geq N$.  Together with
\eqref{eq:automatic-finite-dimensionality}, this would make $A$
finite-dimensional.  Thus $A_n\neq0$ for every $n$.

For $n\geq1$, let
\[
  \mathcal T_n
  :=\{(a_n,\ldots,a_1)\in S^n:a_n\cdots a_1\neq0\}.
\]
Each $\mathcal T_n$ is nonempty because $A_n$ is spanned by products of $n$
elements of $S$.  If $(a_n,\ldots,a_1)\in\mathcal T_n$, then
$a_{n-1}\cdots a_1\neq0$; otherwise left multiplication by $a_n$ would give
$a_n\cdots a_1=0$.  Deleting the leftmost letter defines a finitely branching
rooted tree with nonempty levels.  K\"onig's lemma yields an infinite branch.
\end{proof}

The required algebra is supplied by the classical Golod--Shafarevich
construction.

\begin{theorem}[Golod--Shafarevich]
\label{thm:golod}
There exists an infinite-dimensional positively graded complex nil algebra
$A$ generated by its degree-one component, with
\[
  \dim_{\mathbb C}A_1=2.
\]
\end{theorem}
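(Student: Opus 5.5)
The plan is to reproduce the classical Golod--Shafarevich argument over $\mathbb C$, since the statement is the standard one.

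\textbf{Setup.} Let $F=\mathbb C\langle x,y\rangle_+$ be the free non-unital associative algebra on two generators of degree one, graded by word length. Because $\mathbb C$ is countable-dimensional over $\mathbb Q$, the nonunital free algebra $F$ is countable-dimensional. So we can enumerate its elements as $f_1,f_2,\ldots$.

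\textbf{Relations.} Choose relations inductively to force nilpotency of every element. Write $f_i$ as a sum of homogeneous components of degrees $\le d_i$. For a large exponent $N_i$, every homogeneous component of $f_i^{N_i}$ has degree in $[N_i,N_id_i]$. Declare all these components to be relations, and pick $N_i$ increasing fast enough that the relations have strictly increasing minimal degrees. Let $I$ be the ideal they generate; it is homogeneous. Set $A=F/I$. Then $A$ is graded, generated by $A_1$, and nil, since the image of $f_i^{N_i}$ is zero. Moreover $A_1$ is two-dimensional as long as no relation has degree $\le1$; arranging $N_i\ge2$ gives this.

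\textbf{Infinite dimensionality.} This is the main step. If $r_k$ is the number of relations of degree $k$, the Golod--Shafarevich inequality states that
\[
H_A(t)\,\bigl(1-2t+\textstyle\sum_k r_kt^k\bigr)\ \ge\ 1
\]
coefficientwise, where $H_A$ is the Hilbert series of the unitization. It follows that if $1-2t+\sum_k r_kt^k$ has a root in $(0,1)$, then $A$ is infinite-dimensional.

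The relations coming from $f_i$ number at most $N_i(d_i-1)+1$, one for each degree in $[N_i,N_id_i]$. Choose $N_i$ so large that
\[
\sum_i\bigl(N_id_i\bigr)\,t_0^{N_i}<\tfrac14
\]
at $t_0=\tfrac12$, or at some $t_0$ slightly above $\tfrac12$. Then $1-2t_0+\sum_k r_kt_0^k<0$, while the polynomial equals $1$ at $0$. Hence it has a root in $(0,t_0)$. This is possible inductively because $N t_0^N\to0$.

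\textbf{Proving the inequality.} I expect this to be the main obstacle. The standard route is the Vinberg/Roquette argument. Take a minimal homogeneous generating set $R$ of $I$ and consider the exact sequence
\[
\textstyle\bigoplus_{r\in R}\tilde A\otimes r\otimes \tilde A\ \to\ \tilde A\otimes V\otimes\tilde A\ \to\ \cdots,
\]
or more simply the right $\tilde A$-module inequality $\dim A_n\ge 2\dim A_{n-1}-\sum_k r_k\dim A_{n-k}$. This comes from the surjection $A_{n-1}\otimes V\oplus\bigoplus_k A_{n-k}\otimes R_k\twoheadleftarrow$ onto $F_n$ modulo higher terms. Multiplying by the power series and inducting then gives the coefficientwise bound.

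Since this is classical, I would cite Golod's paper for the proof rather than reprove it, and verify only the counting that yields the root in $(0,1)$.
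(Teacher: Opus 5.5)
Your outline is the same homogeneous Golod--Shafarevich construction that the paper cites (Regev--Regev with $d=2$). However, the step that makes $A$ nil fails as written. The claim that $\mathbb C$ is countable-dimensional over $\mathbb Q$ is false ($[\mathbb C:\mathbb Q]=2^{\aleph_0}$). More to the point, $F$ being countable-dimensional over $\mathbb C$ (true: the nonempty words form a basis) does not make the set $F$ countable; $F$ has the cardinality of the continuum. So no sequence $f_1,f_2,\ldots$ exhausts $F$, and forcing $f_i^{N_i}\in I$ for countably many $f_i$ says nothing about the remaining elements of $A$. As written, your construction works only over a countable field such as $\overline{\mathbb Q}$. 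You cannot then simply extend scalars to $\mathbb C$, because nil-ness is not preserved under field extension in general.

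The standard repair is to kill a generic element for each degree bound at once.

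Fix $d$ and list the words $w_1,\ldots,w_M$ of length between $1$ and $d$, so $M=M_d=2^{d+1}-2$. For $e\in\mathbb Z_{\geq0}^M$ with $|e|:=\sum_je_j=N$, let $P_e\in F$ be the sum of all products $w_{j_1}\cdots w_{j_N}$ in which each $w_j$ occurs exactly $e_j$ times. Then
\[
  \Bigl(\sum_{j=1}^M c_jw_j\Bigr)^N=\sum_{|e|=N}c^eP_e,
  \qquad c^e:=\prod_jc_j^{e_j},\quad c\in\mathbb C^M.
\]
Each $P_e$ is homogeneous of degree $\sum_je_j|w_j|\in[N,Nd]$. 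There are only $\binom{N+M-1}{M-1}$ of them, which is polynomial in $N$.

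Fix $t_0\in(\tfrac12,1)$ and declare all $P_e$ with $|e|=N_d$ to be relations. Choose $2\leq N_1<N_2<\cdots$ successively so that
\[
  \binom{N_d+M_d-1}{M_d-1}t_0^{N_d}<(2t_0-1)2^{-d}.
\]
This is possible because the left side tends to $0$ as $N_d\to\infty$ with $d$ fixed. Then $f^{N_d}\in I$ for every $f\in F$ of degree at most $d$, whatever the cardinality of the field.

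The rest of your argument then goes through unchanged:
\begin{itemize}
\item $I$ is homogeneous;
\item $A_1\cong\mathbb C^2$, since all relations have degree at least $2$;
\item each $r_k$ is finite, since $N_d\to\infty$;
\item the Golod--Shafarevich inequality holds for any homogeneous generating set, so minimality is not needed.
\end{itemize}

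There is also an arithmetic slip in your counting. Write $R(t):=\sum_kr_kt^k$. At $t_0=\tfrac12$ one has $1-2t_0+R(t_0)=R(t_0)>0$. For $t_0$ only slightly above $\tfrac12$, the bound $R(t_0)<\tfrac14$ does not beat $2t_0-1$. What you need is $R(t_0)<2t_0-1$ for some $t_0\in(\tfrac12,1)$, for example $t_0=\tfrac34$ with $R(t_0)<\tfrac12$. Negativity at $t_0$ already contradicts $H_A(t_0)\bigl(1-2t_0+R(t_0)\bigr)\geq1$ for finite-dimensional $A$, so no root is needed.

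Since the paper's proof is only a citation, the uncountable-field issue is exactly what a self-contained version like yours must address.
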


\begin{proof}[Source]
Take $d=2$ in the homogeneous Golod--Shafarevich construction of
\cite[Theorem~1.1 and Section~4]{RegevRegev2009}.  Thus
$T_{\geq1}=\mathbb C\langle x_1,x_2\rangle_{\geq1}$ has a homogeneous
two-sided ideal $I$, generated in degrees at least two, such that
\[
  A:=T_{\geq1}/I
\]
is nil and infinite-dimensional.  The quotient inherits the positive grading,
and $I\cap T_1=0$ gives $A_1\cong\mathbb C^2$.  This is the graded form of
Golod's original construction \cite{Golod1964}.
\end{proof}

\begin{corollary}
\label{cor:existence}
There exists an algebra satisfying the hypotheses of
Theorem~\ref{thm:graded-escape}.
\end{corollary}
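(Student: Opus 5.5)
The corollary follows directly from Theorem~\ref{thm:golod}, by checking each hypothesis of Theorem~\ref{thm:graded-escape} against the algebra it supplies. Take $A=T_{\geq1}/I$ with $T_{\geq1}=\mathbb C\langle x_1,x_2\rangle_{\geq1}$ and $I$ the homogeneous ideal from that source. The hypotheses of Theorem~\ref{thm:graded-escape} are that $A$ is \emph{positively graded}, \emph{nil}, \emph{infinite-dimensional}, and \emph{generated by a finite-dimensional degree-one component}. I will verify them in that order.

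For the grading, $I$ is a homogeneous two-sided ideal, so
\[
  A=\bigoplus_{n\geq1}T_n/(I\cap T_n),
\]
and $A_iA_j\subseteq A_{i+j}$ is inherited from $T$. No degree-zero part appears because we start from $T_{\geq1}$. Nilness and infinite dimensionality are the content of the cited construction. Since $I$ is generated in degrees at least two, $I\cap T_1=0$, and therefore $A_1\cong\mathbb C^2$ is finite-dimensional. Finally, $T_{\geq1}$ is generated by $x_1,x_2$, so $A$ is generated by their images, which span $A_1$.

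The only point needing care is that Theorem~\ref{thm:golod} is phrased over $\mathbb C$. If the source construction were written over an arbitrary field, one would confirm that it applies to any countable or uncountable field, in particular $\mathbb C$. Regev's homogeneous version is stated for general fields, so this is not an obstacle.

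With all four hypotheses checked, $A$ satisfies the assumptions of Theorem~\ref{thm:graded-escape}, which proves the corollary. As a consistency check, Lemma~\ref{lem:branch} then applies with $S=\{x_1,x_2\}$ modulo $I$, which shows the infinite word needed later actually exists.
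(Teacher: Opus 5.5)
Your proposal is correct and takes the same approach as the paper, which proves the corollary in one line by applying Theorem~\ref{thm:golod}. Your hypothesis-by-hypothesis check (inherited grading, $I\cap T_1=0$ giving $A_1\cong\mathbb C^2$, nilness and infinite dimensionality from the cited construction) repeats the verification the paper already places in the source proof of that theorem. Since Theorem~\ref{thm:golod} is stated over $\mathbb C$ with $\dim_{\mathbb C}A_1=2$, the corollary follows directly from its statement, and your aside about the ground field is unnecessary.
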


\begin{proof}
Apply Theorem~\ref{thm:golod}.
\end{proof}

\begin{remark}[Minimal noncommutative input]
\label{rem:noncommutative-input}
The algebra in Theorem~\ref{thm:golod} can be chosen two-generated, and this
is minimal.  A nil algebra generated by one element $g$ is spanned by
$x,x^2,\ldots,x^{N-1}$ once $x^N=0$.  More generally, if a commutative nil
algebra is generated by $x_1,\ldots,x_d$ and $x_j^{N_j}=0$, then it is spanned
by the finitely many monomials $x_1^{m_1}\cdots x_d^{m_d}$ with
$0\leq m_j<N_j$.  Thus the infinite homogeneous escape used below is a
genuinely noncommutative phenomenon.
\end{remark}

\begin{remark}
It is worth noting that it has already been shown in 
\cite[Theorem~6.3.3, p.~458]{GlocknerNeebBook2026}
that all Mackey complete connected abelian Lie groups are regular,
as they are precisely the quotients $\mathfrak{a}/\Gamma$,
where $(\mathfrak{a},+)$ is a Mackey complete vector space 
and $\Gamma \subseteq \mathfrak{a}$ a discrete subgroup.

\end{remark}

\section{The principal unit group on the complete model space}
\label{sec:cia}

Let $A=\bigoplus_{n\geq1}A_n$ be a positively graded complex nil algebra
generated by its finite-dimensional component $A_1$, and let
$\tau_{\mathrm f}$ denote the finest Hausdorff locally convex topology on
$\mathcal B_A=\mathbb C1\oplus A$.

\begin{lemma}[The induced topology on the graded ideal]
\label{lem:induced-topology}
The subspace topology on $A\subseteq\mathcal B_A$ is the finest Hausdorff
locally convex topology on $A$.  Moreover,
\[
  \mathcal B_A=\mathbb C1\oplus A
\]
is a topological direct sum, and the inclusion $A\to\mathcal B_A$ is a
complemented linear embedding.
\end{lemma}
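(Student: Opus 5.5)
The plan is to work with the basic characterization of the finest locally convex topology $\tau_{\mathrm f}$ on a vector space $E$: its continuous seminorms are \emph{all} seminorms on $E$, and every linear map from $(E,\tau_{\mathrm f})$ into any locally convex space is continuous. Write $\tau_A$ for the subspace topology on $A$ and $\tau_{\mathrm f}^A$ for the finest locally convex topology on $A$. Since $\tau_{\mathrm f}^A$ is the finest such topology and the inclusion $(A,\tau_{\mathrm f}^A)\to\mathcal B_A$ is linear, hence continuous, we already have $\tau_A\subseteq\tau_{\mathrm f}^A$. So the content of the first claim is the reverse inclusion.

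For the reverse inclusion, fix an arbitrary seminorm $p$ on $A$ and extend it to $\mathcal B_A$ by
\[
  \tilde p(c1+a):=|c|+p(a).
\]
This $\tilde p$ is a seminorm on $\mathcal B_A$, so it is $\tau_{\mathrm f}$-continuous, and its restriction to $A$ is $p$. Hence every seminorm on $A$ is $\tau_A$-continuous, which gives $\tau_{\mathrm f}^A\subseteq\tau_A$. The topology is Hausdorff because $\tau_{\mathrm f}$ is Hausdorff, and in any case one can separate points using a linear functional.

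For the topological direct sum, I will use the projection
\[
  P\colon\mathcal B_A\to A,\qquad c1+a\mapsto a.
\]
It is linear and therefore continuous from $\tau_{\mathrm f}$. The complementary projection $c1+a\mapsto c1$ is continuous for the same reason. Hence the algebraic decomposition $\mathbb C1\oplus A$ is topological: the addition map $\mathbb C1\times A\to\mathcal B_A$ is a continuous linear bijection, and its inverse $(\mathrm{id}-P,P)$ is continuous. It follows that the inclusion $A\to\mathcal B_A$ is a topological embedding onto a closed complemented subspace. Closedness holds because $A=\ker(\mathrm{id}-P)$. This proves the complemented-embedding statement.

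I do not expect any real obstacle here. The only point that needs care is to use the characterization by seminorms (equivalently, absolutely convex absorbing sets) rather than arguing loosely with neighbourhoods. The key step is the extension of an arbitrary seminorm from $A$ to $\mathcal B_A$, which is where the splitting $\mathcal B_A=\mathbb C1\oplus A$ is used.
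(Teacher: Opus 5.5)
Your proposal is correct and follows essentially the same route as the paper: extend an arbitrary seminorm $p$ on $A$ to a seminorm on $\mathcal B_A$ (the paper uses $\lambda1+a\mapsto p(a)$ rather than your $|c|+p(a)$; the difference is immaterial), then obtain the topological direct sum from continuity of the two linear projections. Your explicit treatment of the easy inclusion and of closedness of $A=\ker(\mathrm{id}-P)$ merely spells out details the paper leaves implicit.
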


\begin{proof}
Let $p$ be a seminorm on $A$.  The formula
\[
  \widetilde p(\lambda1+a):=p(a)
\]
defines a seminorm on $\mathcal B_A$, hence a continuous seminorm for
$\tau_{\mathrm f}$.  Its restriction to $A$ is $p$.  Thus every seminorm on
$A$ is continuous for the induced topology.  Since the induced topology is
locally convex and the finest locally convex topology is maximal among such
topologies, the two coincide.  The scalar projection
$\pi_0(\lambda1+a)=\lambda$ and the projection
$\pi_A(\lambda1+a)=a$ are continuous linear maps, which gives the asserted
topological direct sum.
\end{proof}

\begin{lemma}[The model space]
\label{lem:model-space}
Let $E$ be a countable-dimensional complex vector space with its
finest Hausdorff locally convex topology.  For a Hamel basis
$(e_j)_{j\geq1}$, set
\[
  E_n:=\operatorname{span}_{\mathbb C}\{e_1,\ldots,e_n\}.
\]
Then
\[
  E=\varinjlim E_n
\]
as a locally convex space and as a topological direct limit.  It is a strict
LF-space and a Silva space; in particular, it is complete.
\end{lemma}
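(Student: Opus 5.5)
The argument rests on the universal property of the finest locally convex topology: every seminorm on $E$ is continuous, and every linear map from $E$ into a locally convex space is continuous. I will compare three locally convex topologies on $E$: the finest one $\tau_{\mathrm f}$, the locally convex inductive limit topology $\tau_{\mathrm{lc}}$ of the finite-dimensional spaces $E_n$, and the topological (set-theoretic) direct limit topology $\tau_{\mathrm{top}}$.

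\textbf{Locally convex direct limit.} Each $E_n$ carries its unique Hausdorff vector topology, and every linear map on $E_n$ is continuous. A seminorm $p$ on $E$ is continuous for $\tau_{\mathrm{lc}}$ exactly when each restriction $p|_{E_n}$ is continuous. This always holds, since $E_n$ is finite-dimensional. Hence $\tau_{\mathrm{lc}}$ contains every seminorm topology, so $\tau_{\mathrm{lc}}=\tau_{\mathrm f}$. Both topologies are Hausdorff, because the coordinate functionals with respect to $(e_j)$ are continuous seminorms that separate points. The inclusions $E_n\subseteq E_{n+1}$ are topological embeddings, and each $E_n$ is closed in $E_{n+1}$. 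Therefore $E$ is a strict LF-space (countable strict inductive limit of Fréchet, indeed finite-dimensional Banach, spaces).

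\textbf{Completeness and the Silva property.} I will invoke the classical theorem that a countable strict inductive limit of complete locally convex spaces is complete, together with the Dieudonné–Schwartz theorem that every bounded set lies in some step and is bounded there. For the Silva property, a DFS-space, I note that the connecting maps $E_n\to E_{n+1}$ between finite-dimensional Banach spaces are compact operators. Hence $E$ is a Silva space, and Silva spaces are complete.

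\textbf{Topological direct limit (the main obstacle).} Clearly $\tau_{\mathrm f}\subseteq\tau_{\mathrm{top}}$. For the reverse inclusion, I will use the known fact that for a countable direct sequence of locally compact spaces, here finite-dimensional $E_n$, the topological and locally convex inductive limits coincide. This is Hirai–Shimomura–Tatsuuma–Hirai, also stated in Glöckner's work on direct limits.

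To make this self-contained, I would argue directly. Let $U\subseteq E$ with $U\cap E_n$ open in $E_n$ for all $n$, and let $x\in U$, say $x\in E_{n_0}$. I choose compact convex neighbourhoods $K_n$ of $0$ in $E_n$ recursively for $n\ge n_0$ so that:
\begin{itemize}
\item $x+K_n\subseteq U\cap E_n$;
\item $K_n\subseteq K_{n+1}$, with $K_n$ contained in the interior of $K_{n+1}$ relative to $E_{n+1}$.
\end{itemize}
Such a $K_{n+1}$ exists because $x+K_n$ is compact inside the open set $U\cap E_{n+1}$. I will take for $K_{n+1}$ the convex hull of $K_n$ together with a small ball of $E_{n+1}$ around $0$, using compactness to keep $x+K_{n+1}$ inside $U$.

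Then $V=\bigcup_n K_n$ is convex, and $V\cap E_m$ is a neighbourhood of $0$ in each $E_m$, so $V$ is a $\tau_{\mathrm{lc}}$-neighbourhood of $0$. Since $x+V\subseteq U$, the set $U$ is $\tau_{\mathrm f}$-open. Hence $\tau_{\mathrm{top}}=\tau_{\mathrm f}$, which completes the proof.
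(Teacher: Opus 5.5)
Your proposal is correct. For the identification of the locally convex direct limit with $\tau_{\mathrm f}$, the strict LF property, completeness and the Silva property, it is essentially the paper's proof. The paper argues that every locally convex topology on $E$ makes the inclusions $E_n\to E$ continuous, which is the dual form of your seminorm argument.

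The genuine difference is the topological direct limit. The paper simply cites the coincidence of the locally convex and topological direct-limit topologies for Silva spaces (Gl\"ockner--Neeb, Proposition~B.13.13(a)). You instead prove $\tau_{\mathrm{top}}\subseteq\tau_{\mathrm f}$ directly, from local compactness of the steps and the fact that a convex set meeting every $E_m$ in a $0$-neighbourhood is a $0$-neighbourhood. The citation is shorter and covers arbitrary Silva spaces. Your argument is elementary and shows exactly where finite-dimensionality enters: in the uniform $\varepsilon$ with $x+K_n+B_\varepsilon\subseteq U\cap E_{n+1}$.

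Two details should be tidied.

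First, the convex hull of $K_n$ and a small ball need not contain $K_n$ in its relative interior. Already for a segment and a small disc in the plane, the endpoints of the segment stay on the boundary. Either take $K_{n+1}:=K_n+B_\varepsilon$ instead, or drop the interior requirement, which your conclusion never uses. Note that $x+\operatorname{conv}(K_n\cup B_\varepsilon)\subseteq x+K_n+B_\varepsilon\subseteq U$ still holds, so your hull construction works otherwise. What your argument actually needs is only $K_n\subseteq K_{n+1}$, $x+K_n\subseteq U$, and that each $K_n$ is a $0$-neighbourhood in $E_n$.

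Second, since the scalars are complex, add one line justifying that $V$ is a $\tau_{\mathrm{lc}}$-neighbourhood. The circled core $\bigcap_{|\theta|=1}\theta V\subseteq V$ is absolutely convex and still meets each $E_m$ in a $0$-neighbourhood, because it contains any norm ball of $E_m$ lying in $V$. Hence it is a basic $\tau_{\mathrm{lc}}$-neighbourhood.
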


\begin{proof}
Every linear map from a finite-dimensional locally convex space to a locally
convex space is continuous.  Hence every locally convex vector topology on
$E$ makes the inclusions $E_n\to E$ continuous.  The locally convex direct
limit topology is therefore the finest Hausdorff locally convex topology on
$E$.  The inclusions $E_n\to E_{n+1}$ are closed topological embeddings, so
$E$ is a strict LF-space.  Strict LF-spaces are complete
\cite[Chapter~II, \S6, pp.~59--61]{SchaeferWolff}.

Each inclusion $E_n\to E_{n+1}$ is compact, so $E$ is also a Silva space.
Its locally convex topology agrees with the topological direct-limit topology
\cite[Proposition~B.13.13(a)]{GlocknerNeebBook2026}; the special case of the
finest countable-dimensional topology is also recorded in
\cite[Appendix~D]{NeebRusso2024}.  Any two countably
infinite-dimensional complex vector spaces with their finest locally convex
topologies are topologically isomorphic, because a linear bijection and its
inverse are continuous.  Thus $E\cong\mathbb{C}^{(\mathbb{N})}$.
\end{proof}

\begin{lemma}[Open subsets of a final union]
\label{lem:open-final}
Suppose that $E=\bigcup_nE_n$ has the final topology with respect to the
inclusions $E_n\to E$, and let $U\subseteq E$ be open.  Then $U$ has the final
topology with respect to the inclusions $U\cap E_n\to U$.
\end{lemma}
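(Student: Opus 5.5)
The plan is to check the defining property of the final topology directly: a subset $V\subseteq U$ is open in $U$ if and only if each $V\cap E_n$ is open in $U\cap E_n$. One direction is immediate. If $V$ is open in $U$, then $V$ is open in $E$, because $U$ is open in $E$. Since the inclusions $E_n\to E$ are continuous, $V\cap E_n$ is open in $E_n$, and hence open in the subspace $U\cap E_n$. So the inclusions $U\cap E_n\to U$ are continuous, and the subspace topology on $U$ is no finer than the final topology with respect to them.

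For the converse, let $V\subseteq U$ be such that each $V\cap E_n$ is open in $U\cap E_n$, where $U\cap E_n$ carries the subspace topology from $E_n$. Because $U$ is open in $E$, the set $U\cap E_n$ is open in $E_n$. A subset that is relatively open in an open subspace is open in the ambient space, so $V\cap E_n=V\cap(U\cap E_n)$ is open in $E_n$ for every $n$. By the hypothesis that $E$ carries the final topology with respect to the inclusions $E_n\to E$, it follows that $V$ is open in $E$. Since $V\subseteq U$, the set $V$ is then open in $U$. This proves that the subspace topology on $U$ coincides with the final topology with respect to the inclusions $U\cap E_n\to U$.

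There is no real obstacle here. The only point to handle carefully is the difference between openness in $U\cap E_n$ and openness in $E_n$, and this is exactly where the openness of $U$ is used. Without that hypothesis the statement can fail for general subsets. No property of the topologies on the $E_n$ is needed beyond the fact that $E$ carries the final topology with respect to the inclusions. In particular, the argument applies to the setting of Lemma~\ref{lem:model-space}, where $E=\varinjlim E_n$ is a topological direct limit.
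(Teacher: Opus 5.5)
Your proposal is correct and follows essentially the same argument as the paper. Both use that $U\cap E_n$ is open in $E_n$ to upgrade relative openness of $V\cap E_n$ to openness in $E_n$, then invoke the final topology on $E$; the converse direction comes from continuity of the inclusions.
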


\begin{proof}
Let $O\subseteq U$ and assume that $O\cap E_n$ is open in $U\cap E_n$ for
every $n$.  Since $U\cap E_n$ is open in $E_n$, the set $O\cap E_n$ is open
in $E_n$.  The final topology on $E$ makes $O$ open in $E$, hence open in
$U$.  The converse follows from continuity of the inclusions.
\end{proof}

\begin{proposition}
\label{prop:topological-algebra}
The space $(\mathcal B_A,\tau_{\mathrm f})$ is complete, and multiplication
\[
  \mathcal B_A\times\mathcal B_A\to\mathcal B_A,
  \qquad (x,y)\mapsto xy,
\]
is jointly continuous.  Every bounded subset of $\mathcal B_A$ lies in a
finite-dimensional subspace.
\end{proposition}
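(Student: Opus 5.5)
Completeness comes directly from Lemma~\ref{lem:model-space}. By \eqref{eq:automatic-finite-dimensionality}, $\mathcal B_A$ is countable-dimensional, so it is either finite-dimensional (and then trivially complete) or, with $\tau_{\mathrm f}$, a strict LF-space $\varinjlim E_n$ and hence complete. For the direct-limit description I will use the exhausting sequence
\[
  E_n:=\mathbb C1\oplus A_1\oplus\cdots\oplus A_n ,
\]
which is adapted to the grading. Each $E_n$ is finite-dimensional, and one may choose a Hamel basis running through the homogeneous components in order of degree, so Lemma~\ref{lem:model-space} applies to this sequence.

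Joint continuity of multiplication is the main obstacle. In general, $\varinjlim(E_n\times E_n)$ need not coincide with $E\times E$ in the product topology. Two routes are available.

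\begin{itemize}
\item \emph{First route.} I would rely on a standard fact about countable-dimensional spaces with the finest locally convex topology: every bilinear map from $E\times F$ to a locally convex space is continuous. The reason is that $E\times F$ with the product topology is again countable-dimensional and carries its finest locally convex topology, and the product of two countable strict direct limits of finite-dimensional spaces is the direct limit of the products.
\item \emph{Second route (self-contained).} Fix a continuous seminorm $q$ on $\mathcal B_A$. Choose a Hamel basis $(e_j)$ and set $c_{ij}:=q(e_ie_j)$. Define the weighted $\ell^1$-seminorm
\[
  p\Bigl(\sum x_je_j\Bigr):=\sum_j w_j|x_j|,
  \qquad w_j:=\max\Bigl(1,\ \max_{i\leq j}c_{ij},\ \max_{i\leq j}c_{ji}\Bigr)^{1/2}\cdot 2^{j}.
\]
Any seminorm is continuous for $\tau_{\mathrm f}$, so $p$ is continuous. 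Then
\[
  q(xy)\leq\sum_{i,j}|x_i||y_j|c_{ij}.
\]
Since $c_{ij}\leq w_{\max(i,j)}^2\leq w_iw_j$ (after ensuring $w$ is increasing and at least $1$), this gives $q(xy)\leq p(x)p(y)$. This is the estimate I would write out, and it proves joint continuity directly.
\end{itemize}

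The bounded-set claim rests on the following fact: in $\varinjlim E_n$ with the $E_n$ closed, every bounded set lies in some $E_n$. I would prove it directly. Suppose $B$ is bounded but meets no $E_n$ entirely. Then pick $x_k\in B$ and basis indices $j_k$, strictly increasing, such that the $j_k$-th coordinate $x_{k,j_k}\neq0$ and $x_k\in E_{j_k}$, arranging this inductively via a triangular choice. Take the seminorm
\[
  p(x):=\sum_k k\,|\phi_k(x)|/|\phi_k(x_k)|,
\]
where the $\phi_k$ are linear functionals chosen triangularly so that $\phi_k(x_m)=0$ for $m<k$. Any seminorm is continuous, and $p(x_k)\geq k$, which is unbounded on $B$ and gives a contradiction. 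Alternatively, I would cite the regularity of strict LF-spaces from \cite{SchaeferWolff}.
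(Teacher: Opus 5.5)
Your completeness argument and your bounded-set argument are sound and essentially the paper's. In the bounded-set argument, take $\phi_k:=e_{j_k}^*$, the coordinate functional, so that the sum defining $p$ has only finitely many nonzero terms at each point; the triangularity condition is then not needed.

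\textbf{The gap.} It lies in the continuity estimate you say you would write out. The chain $c_{ij}\le w_{\max(i,j)}^2\le w_iw_j$ fails at its second step. For an increasing positive sequence, $w_iw_j=w_{\min(i,j)}w_{\max(i,j)}\le w_{\max(i,j)}^2$, so the inequality runs the other way. After the square root, only the fixed factor $w_{\min(i,j)}$ is left to absorb a possibly huge $c_{ij}$.

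Concretely, take a homogeneous Hamel basis with $e_1=1$ and $e_2\in A_1$, and let $q:=K|e_2^*|$, where $e_2^*$ is the coordinate functional of $e_2$. For degree reasons the only nonzero entries are $c_{12}=c_{21}=K$. Your weights are then $w_1=2$ and $w_2=4\sqrt K$, and passing to running maxima does not change them. Hence $q(1\cdot e_2)=K>8\sqrt K=p(1)p(e_2)$ as soon as $K>64$.

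\textbf{The repair.} Drop the square root. With $w_j:=\max\bigl(1,\max_{i\le j}c_{ij},\max_{i\le j}c_{ji}\bigr)$ one has $c_{ij}\le w_{\max(i,j)}\le w_{\max(i,j)}w_{\min(i,j)}=w_iw_j$, because all $w_k\ge1$. Hence $q(xy)\le p(x)p(y)$. This is exactly the paper's argument, which picks two weight sequences with $a_{ij}\le\alpha_i\beta_j$ and uses the weighted $\ell^1$ seminorms they define; a single symmetric sequence works equally well.

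\textbf{Your first route.} It is correct in substance, but its justification as phrased is incomplete. That $E\times F$ carries the finest locally convex topology only gives continuity of \emph{linear} maps. For bilinear maps you need that $E\times F$ is the direct limit of the $E_n\times F_n$ \emph{as a topological space}. This does hold for countable direct limits of finite-dimensional spaces: they are $k_\omega$-spaces, and finite products of $k_\omega$-spaces are again $k_\omega$. Your caution that the two topologies may differ applies to general LF-spaces, not to this one. This fact must be cited; it cannot be derived from the linear statement.
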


\begin{proof}
If $\mathcal B_A$ is finite-dimensional, completeness, joint continuity of
multiplication, and the assertion concerning bounded sets are immediate.
Assume henceforth that $\mathcal B_A$ is infinite-dimensional.  Completeness
follows from Lemma~\ref{lem:model-space}.

Choose a complex Hamel basis $(e_j)_{j\geq1}$.  Let $p$ be a seminorm on
$\mathcal B_A$ and put $a_{ij}:=p(e_ie_j)$.  Choose positive numbers
$\alpha_i,\beta_j$ such that $a_{ij}\leq\alpha_i\beta_j$ for all $i,j$; for
example,
\[
  \alpha_i:=1+\max_{1\leq j\leq i}2^j a_{ij},
  \qquad
  \beta_1:=\frac12,
  \qquad
  \beta_j:=\max\left\{2^{-j},
  \max_{1\leq i<j}\frac{a_{ij}}{\alpha_i}\right\}
  \quad(j\geq2).
\]
If $i<j$, the second term in the definition of $\beta_j$ gives
$a_{ij}\leq\alpha_i\beta_j$.  If $i\geq j$, then
$\alpha_i\geq2^ja_{ij}$ and $\beta_j\geq2^{-j}$, including $j=1$.
Thus $a_{ij}\leq\alpha_i\beta_j$ in all cases.  Define
\[
  q\left(\sum_i x_ie_i\right):=\sum_i\alpha_i|x_i|,
  \qquad
  r\left(\sum_j y_je_j\right):=\sum_j\beta_j|y_j|.
\]
All seminorms are continuous for $\tau_{\mathrm f}$, and
\[
  p(xy)\leq q(x)r(y).
\]
Thus multiplication is jointly continuous.

Suppose that a bounded set $D\subseteq\mathcal B_A$ is not contained in a
finite-dimensional subspace.  Choose linearly independent elements
$d_1,d_2,\ldots\in D$, extend them to a Hamel basis, and define a seminorm
$p_D$ by $p_D(d_j)=j$ and weight $1$ on the remaining basis vectors.  The
seminorm is continuous for $\tau_{\mathrm f}$, whereas $p_D(D)$ is
unbounded.  This contradiction proves the final assertion.
\end{proof}

\begin{lemma}[Uniform nilpotence]
\label{lem:uniform-nilpotence}
For every finite-dimensional subspace $F\subseteq A$, there is an integer
$N_F$ such that
\[
  a^{N_F}=0
  \qquad(a\in F).
\]
\end{lemma}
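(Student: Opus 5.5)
Since $F$ is finite-dimensional, it involves only finitely many homogeneous degrees. The plan is to reduce the claim to a single nilpotent matrix, namely a generic element of $F$ over a polynomial ring, and then bound its nilpotency index by the Cayley--Hamilton theorem. Mere nilness of each element of $F$ gives no uniform exponent, so some extra structure is needed. I will exploit the grading together with the finite-dimensionality of each $A_n$ from \eqref{eq:automatic-finite-dimensionality}.

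Choose a basis $f_1,\dots,f_k$ of $F$. Each $f_i$ lies in $A_{\le m}:=\bigoplus_{n\le m}A_n$ for some fixed $m$. For $a=\sum_i x_if_i$, the power $a^N$ is a polynomial expression in $x=(x_1,\dots,x_k)\in\mathbb C^k$ with values in $A$. Its component in degree $n$ is a homogeneous polynomial map into the finite-dimensional space $A_n$, and this component vanishes unless $N\le n\le Nm$.

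For the key step, use that $a^N$ lies in the ideal $A^{\ge N}:=\bigoplus_{n\ge N}A_n$, because every $a\in A$ has no degree-zero part. Fix $d$ and work in the finite-dimensional algebra $A/A^{\ge d}$, of dimension $D_d=\sum_{n<d}\dim A_n$. The image of $a$ acts on this space by left multiplication, and this operator is nilpotent because $a$ is nil. The Cayley--Hamilton theorem therefore gives $a^{D_d+1}\in A^{\ge d}$, uniformly in $a$. However, this only shows that high powers have high degree, not that they vanish.

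To get actual vanishing, I plan an algebraic-geometry argument over the generic element. For each $N$, let $Z_N=\{x\in\mathbb C^k: a(x)^N=0\}$. Each $Z_N$ is a Zariski-closed subset of $\mathbb C^k$, since it is cut out by the vanishing of finitely many polynomial components, one for each degree $n\le Nm$. The sets $Z_N$ increase with $N$, and their union is $\mathbb C^k$ because $A$ is nil. By Noetherianity, the increasing chain of closed sets stabilizes at some $Z_{N_0}$. The union of all the $Z_N$ is then $Z_{N_0}$, which must equal $\mathbb C^k$. Alternatively, if I prefer to avoid stabilization, I can use a Baire-type argument: $\mathbb C^k$ is not a countable union of proper algebraic subsets, so some $Z_N$ equals $\mathbb C^k$. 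Either way, take $N_F:=N_0$.

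The main obstacle is exactly this passage from pointwise to uniform nilpotence. I expect the Noetherian or Baire argument on the closed sets $Z_N$ to settle it cleanly. I need to check that each $Z_N$ is genuinely Zariski-closed, which holds because $a^N$ has only finitely many nonzero homogeneous components, each lying in a finite-dimensional $A_n$.
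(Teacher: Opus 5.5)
The setup is fine: each $Z_N$ is Zariski-closed, the $Z_N$ increase, and their union is $\mathbb C^k$. The gap is the step you present first, \emph{by Noetherianity, the increasing chain of closed sets stabilizes}, which is false. Noetherianity of the Zariski topology is the \emph{descending} chain condition on closed sets, coming from the ascending chain condition on ideals of $\mathbb C[x_1,\dots,x_k]$. An increasing chain of closed sets corresponds to a decreasing chain of radical ideals and need not stabilize. For example, $\{1\}\subset\{1,2\}\subset\{1,2,3\}\subset\cdots$ in $\mathbb C$ never stabilizes.

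No purely algebraic argument of this type can work. Over the countable field $\overline{\mathbb Q}$, which is just as Noetherian, the affine line is an increasing countable union of finite, hence proper Zariski-closed, sets. The conclusion therefore needs an input specific to $\mathbb C$: uncountability, or Baire. Noetherianity does not provide it.

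Your fallback argument does provide it and is correct. A proper Zariski-closed subset of $\mathbb C^k$ is nowhere dense in the Euclidean topology, because a polynomial vanishing on a nonempty open set vanishes identically. So Baire rules out writing $\mathbb C^k=\bigcup_N Z_N$ with every $Z_N$ proper, and some $Z_N$ equals $\mathbb C^k$.

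This is exactly the paper's proof. The paper applies Baire to the closed sets $Z_m\subseteq F$ to get one with nonempty interior. It then chooses a linear functional $\ell$ to reduce to the identity theorem for the scalar polynomial $\ell\circ P_{N_F}$.

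So make the Baire argument the proof and delete the Noetherian claim. The Cayley--Hamilton paragraph can also go: for $a\in A$ the grading gives $a^d\in\bigoplus_{n\ge d}A_n$ trivially, which is already better than the bound $D_d+1$, and it plays no role in the conclusion.
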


\begin{proof}
Set $Z_m:=\{a\in F:a^m=0\}$.  Each $Z_m$ is closed and
$F=\bigcup_{m\geq1}Z_m$.  The Baire theorem gives an $N_F$ for which
$Z_{N_F}$ has nonempty interior.  Let $P_{N_F}:F\to A$ be the polynomial map
$P_{N_F}(a)=a^{N_F}$.  If $P_{N_F}(a_0)\neq0$ for some $a_0\in F$, choose an
algebraic linear functional $\ell\in A^*$ with
$\ell(P_{N_F}(a_0))\neq0$.  The scalar polynomial $\ell\circ P_{N_F}$
vanishes on the nonempty open set $\operatorname{int}(Z_{N_F})$, hence is
identically zero, a contradiction.
\end{proof}

\begin{theorem}
\label{thm:cia-and-principal}
The algebra $\mathcal B_A$ is a complete complex continuous inverse algebra.
Its unit group $G_A=\mathcal B_A^\times$ is a complex analytic BCH--Lie group.
The principal unit group $H_A=1+A$ is a closed normal split embedded complex
analytic BCH--Lie subgroup modelled on the complete locally convex space $A$.
If $A$ is infinite-dimensional, then $A\cong\mathbb{C}^{(\mathbb{N})}$ is a strict
LF-space and a Silva space.  In fact,
\[
  G_A\cong\mathbb C^\times\times H_A
\]
as complex analytic Lie groups.  The group $H_A$ is contractible.
\end{theorem}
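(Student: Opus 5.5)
The plan is to prove the continuous inverse algebra property first, then read off all Lie group statements from it.

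\emph{Step 1: the unit group and its openness.} Completeness and joint continuity of multiplication are given by Proposition~\ref{prop:topological-algebra}. An element $\lambda1+a$ with $a\in A$ is invertible if and only if $\lambda\neq0$. Indeed, if $\lambda\neq0$, then $a/\lambda$ is nilpotent, so the finite Neumann series gives the inverse
\[
  (\lambda1+a)^{-1}=\lambda^{-1}\sum_{k\ge0}(-a/\lambda)^k .
\]
If $\lambda=0$, then $a$ is nilpotent and hence not invertible. Thus $\mathcal B_A^\times=\pi_0^{-1}(\mathbb C^\times)$, which is open by Lemma~\ref{lem:induced-topology}.

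\emph{Step 2: continuity of inversion (expected main obstacle).} On each finite-dimensional subspace $F\subseteq A$, Lemma~\ref{lem:uniform-nilpotence} gives $a^{N_F}=0$ for all $a\in F$. Hence on the open set $\mathbb C^\times1\oplus F$ of $\mathbb C1\oplus F$, inversion is the rational map $\lambda^{-1}\sum_{k<N_F}(-a/\lambda)^k$. This map is holomorphic in the finite-dimensional sense. Its values lie in the finite-dimensional subalgebra generated by $F$, but continuity into $\mathcal B_A$ is automatic from finite-dimensionality.

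To pass to all of $\mathcal B_A^\times$, apply Lemma~\ref{lem:model-space} to write $\mathcal B_A=\varinjlim E_n$ with $E_n=\mathbb C1\oplus F_n$ as a topological direct limit. Lemma~\ref{lem:open-final} then shows that $\mathcal B_A^\times$ carries the final topology with respect to the sets $\mathcal B_A^\times\cap E_n$. A map out of a topological direct limit is continuous once its restrictions to the steps are continuous, so inversion is continuous. The delicate point is exactly this use of the topological (not merely locally convex) direct limit; Proposition B.13.13(a) as quoted in Lemma~\ref{lem:model-space} supplies it. This completes the proof that $\mathcal B_A$ is a complete CIA.

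\emph{Step 3: Lie group structures.} For a Mackey-complete (here complete) CIA, the general theory of Gl\"ockner (cited in the references) shows that $\mathcal B_A^\times$ is a complex analytic BCH--Lie group, with exponential function given by the power series and chart at $1$ given by the logarithm near $1$. I would cite this result rather than reprove it.

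For $H_A=1+A=\pi_0^{-1}(1)$, note first that it is a closed normal subgroup, being the kernel of the continuous homomorphism $\pi_0|_{G_A}:G_A\to\mathbb C^\times$. The global chart is $\log(1+a)=\sum_{k\geq1}(-1)^{k+1}a^k/k$, with inverse $\exp$. Both series are finite on each $F_n$ by uniform nilpotence, and an argument like that of Step 2 makes them holomorphic on $A$: complex analyticity on a Silva space follows from holomorphy on each step together with continuity, or by invoking the same direct-limit criterion. Hence $\exp:A\to H_A$ is a biholomorphic homeomorphism. The set $1+A$ is an affine translate of the complemented subspace $A$, which gives the split embedded submanifold structure.

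The map
\[
  \mathbb C^\times\times H_A\to G_A,\qquad (\lambda,h)\mapsto\lambda h,
\]
is a holomorphic group isomorphism, since scalars are central. Its inverse $x\mapsto(\pi_0(x),\pi_0(x)^{-1}x)$ is also holomorphic, so it is an isomorphism of complex analytic Lie groups. Contractibility of $H_A$ follows from the global homeomorphism $\exp:A\to H_A$, since the vector space $A$ is contractible. Finally, the identification $A\cong\mathbb C^{(\mathbb N)}$ as a strict LF-space and Silva space is Lemma~\ref{lem:model-space} together with Lemma~\ref{lem:induced-topology}.
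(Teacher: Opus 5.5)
Your argument is correct, and most of it is the paper's own proof. Steps~1--2 match, including the use of Lemma~\ref{lem:open-final} and the topological direct limit. So do the appeal to Gl\"ockner's theorem for $G_A$, the affine split-submanifold structure of $1+A$, and the isomorphism $(\lambda,h)\mapsto\lambda h$. Describing $H_A$ as $\ker(\pi_0|_{G_A})$ is an equally quick way to see that it is closed and normal.

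The real difference is how you get the BCH property of $H_A$. The paper does not analyse $\exp_{H_A}$ in this proof. It takes a BCH chart $\exp_{G_A}\colon U\to V$ of $G_A$ and shrinks $U$ so that $|\pi_0(x)|<\pi$. Naturality, $\pi_0(\exp_{G_A}x)=e^{\pi_0(x)}$, then gives $\exp_{G_A}(U\cap A)=V\cap H_A$.

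You instead prove that the finite $\exp$ and $\log$ series are mutually inverse complex analytic maps on all of $A$. For this you use uniform nilpotence on the steps, the direct-limit continuity argument, and the fact that a continuous map into the complete space $\mathcal B_A$ which is holomorphic on every finite-dimensional affine subspace (i.e.\ G\^ateaux-holomorphic) is complex analytic. Your route yields more: a global biholomorphism, whereas the paper's Corollary~\ref{cor:global-exponential} only records a homeomorphism. The cost is an extra analyticity criterion, while the paper simply reuses the chart Gl\"ockner's theorem already supplies.

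To finish your route you must say two things explicitly. First, the series $\exp$ is the Lie-theoretic exponential of $H_A$, because $t\mapsto\exp(ta)$ is an analytic one-parameter subgroup with derivative $a$ in the affine chart. Second, a complex analytic Lie group whose exponential is an analytic diffeomorphism near $0$ is BCH. As written, you stop at ``biholomorphic homeomorphism'' and never draw the BCH conclusion. For contractibility the exponential is unnecessary: the affine chart $a\mapsto 1+a$ is already a homeomorphism $A\to H_A$, and the paper uses the contraction $(s,1+a)\mapsto 1+sa$.

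One remark in Step~2 is false and should be removed. The subalgebra generated by a finite-dimensional $F\subseteq A$ need not be finite-dimensional; for $F=A_1$ it is all of $A$, which is the whole point of the paper. What is true, and suffices, is that the inversion formula on $\mathbb C^\times1\oplus F$ takes values in the finite-dimensional subspace $\mathbb C1+\sum_{k<N_F}\operatorname{span}\{x_1\cdots x_k:x_j\in F\}$. Alternatively, continuity there follows at once from joint continuity of multiplication (Proposition~\ref{prop:topological-algebra}).
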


\begin{proof}
An element $u=\lambda1+a$ is invertible precisely when $\lambda\neq0$.  If
$a^N=0$, then
\begin{equation}
  (\lambda1+a)^{-1}
  =\lambda^{-1}\sum_{k=0}^{N-1}(-\lambda^{-1}a)^k.
  \label{eq:inverse-formula}
\end{equation}
Hence
\[
  G_A=\{\lambda1+a:\lambda\in\mathbb C^\times,\ a\in A\}
\]
is open, since the scalar projection
$\pi_0(\lambda1+a)=\lambda$ is continuous.

If $A$ is finite-dimensional, Lemma~\ref{lem:uniform-nilpotence} and
Formula~\eqref{eq:inverse-formula} give a single finite rational-polynomial
formula for inversion on $G_A$, hence continuity.  Suppose that $A$ is
infinite-dimensional.  Use a basis exhaustion $E_n$ as in
Lemma~\ref{lem:model-space}.  The finite-dimensional space
$F_n:=\pi_A(E_n)$ has a uniform nilpotence index by
Lemma~\ref{lem:uniform-nilpotence}.  Formula~\eqref{eq:inverse-formula}
therefore gives a finite rational-polynomial expression for inversion on
$E_n\cap G_A$.  Lemma~\ref{lem:open-final} and the final topology of
Lemma~\ref{lem:model-space} imply continuity of inversion on $G_A$.  Thus
$\mathcal B_A$ is a complete complex continuous inverse algebra.

By \cite[Theorem~5.6]{GlocknerCIA2002}, inversion is analytic and $G_A$
is a complex analytic Lie group.  Since $\mathcal B_A$ is complete, hence
Mackey-complete, the same theorem gives the BCH property.

By Lemma~\ref{lem:induced-topology}, the affine chart
$A\to H_A$, $a\mapsto1+a$, uses the finest locally convex topology on $A$.
In this chart multiplication is
\[
  a*b=a+b+ab,
\]
and inversion is the restriction of the analytic inversion on $G_A$.
Accordingly, $H_A$ is a complex analytic Lie group modelled on $A$.  If $A$
is infinite-dimensional, Lemma~\ref{lem:model-space} identifies this model
space with the complete strict LF-space $\mathbb{C}^{(\mathbb{N})}$.

The map
\[
  \Theta:\mathbb C^\times\times H_A\to G_A,
  \qquad (\lambda,1+a)\mapsto\lambda(1+a),
\]
is a group isomorphism with inverse
\[
  \Theta^{-1}(\lambda1+a)
  =\bigl(\lambda,1+\lambda^{-1}a\bigr).
\]
Both maps are analytic in the global affine charts, because scalar
multiplication is bilinear and $\lambda\mapsto\lambda^{-1}$ is analytic on
$\mathbb C^\times$.  Thus $H_A$ is a closed normal split embedded analytic direct factor of $G_A$;
in the global affine charts, its inclusion is represented by the complemented
linear embedding $A\hookrightarrow\mathbb C1\oplus A$.

Let $\exp_{G_A}$ denote the exponential map of the BCH--Lie group $G_A$.
Since
\[
  \pi_0:G_A\to\mathbb C^\times
\]
is an analytic homomorphism, naturality of exponential maps gives
\[
  \pi_0(\exp_{G_A}x)
  =\exp_{\mathbb C^\times}(\pi_0x)
  =e^{\pi_0(x)}.
\]
Choose a BCH neighborhood $U\subseteq\mathcal B_A$ on which
$\exp_{G_A}:U\to V$ is an analytic diffeomorphism, and shrink $U$ so that
$|\pi_0(x)|<\pi$ for $x\in U$.  Then
\[
  \exp_{G_A}(U\cap A)=V\cap H_A.
\]
The inclusion from left to right follows from naturality.  Conversely, if
$\exp_{G_A}(x)\in H_A$, then $e^{\pi_0(x)}=1$; the bound
$|\pi_0(x)|<\pi$ implies $\pi_0(x)=0$.  The restricted chart proves that
$H_A$ is a BCH--Lie group.

Finally,
\[
  [0,1]\times H_A\to H_A,
  \qquad (s,1+a)\mapsto1+sa,
\]
is a smooth contraction to the identity.  Thus $H_A$ is contractible.
\end{proof}

\begin{corollary}[Global exponential homeomorphism]
\label{cor:global-exponential}
The exponential map
\[
  \exp_{H_A}:A\longrightarrow H_A
\]
is a homeomorphism.
\end{corollary}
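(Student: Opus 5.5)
The plan is to identify $\exp_{H_A}$ with the algebraic exponential series, which in this nil setting is a polynomial on every finite-dimensional subspace. Then exhibit the logarithm series as an explicit two-sided inverse, and prove continuity of both maps stage by stage using the final topology.

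\emph{Step 1: identify the map.} For $a\in A$, Lemma~\ref{lem:uniform-nilpotence} gives $a^N=0$ for some $N$, so
\[
  E(a):=\sum_{k=0}^{N-1}\frac{a^k}{k!}
\]
is a finite sum lying in $1+A$. The curve $t\mapsto E(ta)$ is a polynomial in $t$ with values in a finite-dimensional subspace of $\mathcal B_A$. It satisfies $E(0)=1$ and $\frac{d}{dt}E(ta)=aE(ta)=E(ta)a$, and $E(sa)E(ta)=E((s+t)a)$ because $a$ commutes with itself. So it is a smooth one-parameter subgroup of $H_A$ with initial velocity $a$. In the global affine chart of Theorem~\ref{thm:cia-and-principal}, left translation by $g$ is the linear map $x\mapsto gx$, so $T_eL_g a=ga$. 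Uniqueness of one-parameter subgroups in the BCH--Lie group $H_A$ (or directly in $G_A$ via \cite{GlocknerCIA2002}) then gives $\exp_{H_A}(a)=E(a)$.

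\emph{Step 2: construct the inverse.} Define
\[
  L(1+a):=\sum_{k=1}^{N-1}\frac{(-1)^{k+1}a^k}{k},
\]
where again $a^N=0$. Both $E(a)$ and $L(1+a)$ are polynomials in the single element $a$, so they live in the commutative nil subalgebra $\mathbb C[a]a$. There the identities $E(L(1+a))=1+a$ and $L(E(a))=a$ reduce to the formal power series identities $\exp(\log(1+x))=1+x$ and $\log(\exp x)=x$ in $\mathbb Q[[x]]$, truncated modulo $x^N$. This truncation is legitimate because both compositions only involve powers $x^k$ with $k<N$ after reduction. Hence $E$ is a bijection $A\to H_A$ with inverse $L$.

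\emph{Step 3: continuity.} This is the only step requiring care, since the degree of the polynomial depends on $a$. Take the exhaustion $A=\bigcup_n F_n$ by finite-dimensional subspaces, as in Lemma~\ref{lem:model-space}; the topology on $A$ is final with respect to these inclusions. On each $F_n$, Lemma~\ref{lem:uniform-nilpotence} gives a uniform index $N_{F_n}$, so $E|_{F_n}$ is a single polynomial map into a finite-dimensional subspace. It is therefore continuous into $A$, and hence, via the chart, into $H_A$. By the final topology, $E$ is continuous.

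For $L$, identify $H_A$ with $A$ through $1+a\mapsto a$. On each $F_n$ the map $a\mapsto L(1+a)$ is again a single polynomial, so the same final-topology argument applies; Lemma~\ref{lem:open-final} is not even needed, since the domain is all of $A$. Therefore $\exp_{H_A}$ is a homeomorphism, and in fact both it and its inverse are analytic, being polynomial on every stage of the exhaustion.
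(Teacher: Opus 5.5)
Your proposal is correct and follows the paper's proof essentially step for step. It uses the same truncated series $E$ and $L$, the identification $E(a)=\exp_{H_A}(a)$ via the one-parameter subgroup $t\mapsto E(ta)$, the reduction of $E\circ L$ and $L\circ E$ to formal identities in the commutative algebra generated by one nilpotent element, and continuity of $L$ from uniform nilpotence on the finite-dimensional stages together with the final topology. The only deviations are that you prove continuity of $E$ by the same stagewise argument, where the paper simply invokes smoothness of the Lie-group exponential, and that your closing remark on analyticity is an aside the statement does not need.
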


\begin{proof}
For $a\in A$, choose $N$ with $a^N=0$ and set
\[
  E(a):=1+\sum_{j=1}^{N-1}\frac{a^j}{j!}.
\]
The curve $t\mapsto E(ta)$ is a smooth one-parameter subgroup of $H_A$ with
derivative $a$ at zero, so $E(a)=\exp_{H_A}(a)$.  Conversely, for
$1+x\in H_A$, choose $M$ with $x^M=0$ and define
\[
  L(1+x):=\sum_{j=1}^{M-1}\frac{(-1)^{j+1}}{j}x^j.
\]
All terms lie in the commutative finite-dimensional algebra generated by the
single element $g$.  The formal identities for exponential and logarithm in
a sufficiently high truncated polynomial ring give
\[
  E(L(1+x))=1+x,
  \qquad
  L(E(a))=a.
\]
Thus $\exp_{H_A}$ is bijective with inverse $L$.

It remains to prove continuity of $L$.  If $A$ is finite-dimensional,
Lemma~\ref{lem:uniform-nilpotence}, applied to $F=A$, gives an integer $N$
such that $x^N=0$ for every $x\in A$.  Hence
\[
  L(1+x)=\sum_{j=1}^{N-1}\frac{(-1)^{j+1}}{j}x^j
\]
is one fixed polynomial on $H_A$ and is therefore continuous.

Assume now that $A$ is infinite-dimensional.  Choose a finite-dimensional
basis exhaustion $A=\bigcup_nE_n$ as in Lemma~\ref{lem:model-space}.
Lemma~\ref{lem:uniform-nilpotence} gives an integer $N_n$ such that
$x^{N_n}=0$ for every $x\in E_n$.  Hence $L(1+x)$ is a fixed polynomial in
$g$ on $1+E_n$, and is continuous there.  The affine chart $A\to H_A$,
$x\mapsto1+x$, and the topological final property in
Lemma~\ref{lem:model-space} imply that $L:H_A\to A$ is continuous.  The
exponential map is smooth, hence continuous, so it is a homeomorphism.
\end{proof}

\begin{remark}
\label{rem:finite-range}
The image of every continuous curve $c:[0,1]\to\mathcal B_A$ is compact,
hence bounded, and Proposition~\ref{prop:topological-algebra} places it in a
finite-dimensional linear subspace.  Linear integration therefore causes no
obstruction in the example below.  The unbounded homogeneous depth is created
by repeated multiplication.
\end{remark}

\begin{remark}[Vector-space and Lie-group direct limits]
The finite-dimensional exhaustion used in Lemma~\ref{lem:model-space} consists
of vector subspaces.  It need not consist of subalgebras or Lie subgroups.
Accordingly, regularity theorems for direct limits of finite-dimensional Lie
groups, such as \cite{GlocknerDirectLimits2021}, do not apply to this
construction.
\end{remark}

\section{A smooth finite-dimensional control without evolution}
\label{sec:control}

Assume that $A$ is infinite-dimensional.  Fix a basis
$S=\{b^{(1)},\ldots,b^{(d)}\}$ of $A_1$ and a sequence
$(b_n)_{n\geq1}\subseteq S$ supplied by Lemma~\ref{lem:branch}.  Put
\[
  w_n:=b_n\cdots b_1\neq0.
\]
For each $b\in S$, define
\[
  \nu_b:=\min\{m\geq2:b^m=0\};
\]
this is well defined because every $b\in S$ is nonzero and nilpotent.  Set
\begin{equation}
  W:=\operatorname{span}_{\mathbb C}
  \{b^j:b\in S,\ 1\leq j<\nu_b\}.
  \label{eq:control-space}
\end{equation}
Then $W$ is finite-dimensional.

Choose $\chi\in C^\infty([0,1],[0,1])$ which is $0$ near $0$ and $1$ near
$1$.  For $n\geq0$, set
\[
  t_n:=1-2^{-n}.
\]
For $n\geq1$, define
\[
  I_n:=[t_{n-1},t_n],
  \qquad
  \delta_n:=t_n-t_{n-1}=2^{-n},
  \qquad
  \varepsilon_n:=2^{-n^3}.
\]
For $0<\lambda\leq1$, define
\[
  h_{n,\lambda}(s):=1+\lambda\varepsilon_n\chi(s)b_n
  \qquad(0\leq s\leq1)
\]
and
\[
  \eta_{n,\lambda}(s):=
  h_{n,\lambda}'(s)h_{n,\lambda}(s)^{-1}.
\]
With $\nu:=\nu_{b_n}$, one has
\begin{equation}
  \eta_{n,\lambda}(s)
  =\lambda\varepsilon_n\chi'(s)
   \sum_{j=0}^{\nu-2}
   (-\lambda\varepsilon_n\chi(s))^j b_n^{j+1}
  \in W.
  \label{eq:eta-expansion}
\end{equation}
Set
\begin{equation}
  \gamma_\lambda(t):=
  \delta_n^{-1}\eta_{n,\lambda}
  \left(\frac{t-t_{n-1}}{\delta_n}\right)
  \quad(t\in I_n),
  \qquad
  \gamma_\lambda(1):=0.
  \label{eq:gamma-definition}
\end{equation}

\begin{lemma}[Smooth pulse family]
\label{lem:smooth-pulses}
The curves $\gamma_\lambda$ belong to $C^\infty([0,1],W)$ and
\[
  \gamma_\lambda\longrightarrow0
  \quad\text{in }C^\infty([0,1],W)
  \quad(\lambda\downarrow0).
\]
\end{lemma}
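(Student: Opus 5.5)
Since $W$ is finite-dimensional, I fix a norm $\|\cdot\|$ on it; $C^\infty([0,1],W)$ then carries the seminorms $\sup_t\|\gamma^{(k)}(t)\|$. The plan is to bound each pulse and all its derivatives explicitly. From \eqref{eq:eta-expansion}, $\eta_{n,\lambda}$ is a polynomial in the scalar functions $\chi,\chi'$ with coefficients $b_n^{j+1}$. There are only finitely many possible $b_n\in S$, and $j\le\nu_b-2$ is uniformly bounded. Hence for every $k$ there is a constant $C_k$, independent of $n$ and $\lambda\in(0,1]$, with
\[
  \sup_{s\in[0,1]}\bigl\|\eta_{n,\lambda}^{(k)}(s)\bigr\|\le C_k\,\lambda\varepsilon_n .
\]
This uses $\lambda\varepsilon_n\le1$, so every power $(\lambda\varepsilon_n)^{j+1}$ is at most $\lambda\varepsilon_n$; each term is a product of $\lambda\varepsilon_n$, powers of $\lambda\varepsilon_n\chi$, and derivatives of $\chi$, which are bounded by constants depending on $k$ only. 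By the chain rule, on $I_n$ we get $\gamma_\lambda^{(k)}(t)=\delta_n^{-1-k}\eta_{n,\lambda}^{(k)}((t-t_{n-1})/\delta_n)$, and therefore
\[
  \sup_{t\in I_n}\|\gamma_\lambda^{(k)}(t)\|\le C_k\lambda\,2^{n(k+1)}2^{-n^3}.
\]

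Next I check smoothness on $[0,1)$. Since $\chi$ is $0$ near $0$ and $1$ near $1$, $\chi'$ vanishes near both endpoints, so $\eta_{n,\lambda}$ vanishes identically near $s=0$ and $s=1$. Thus each piece is flat near the junction points $t_n$. Consequently $\gamma_\lambda$ is identically zero on a neighbourhood of each $t_n$, and it is smooth on $[0,1)$; at $t=0$ it is smooth one-sidedly.

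The main point is the endpoint $t=1$, and I would argue by induction on $k$ that $\gamma_\lambda^{(k)}(1)$ exists and equals $0$. Set $M_{k,n}:=C_k2^{n(k+1)-n^3}$. This sequence tends to $0$ as $n\to\infty$ for each fixed $k$, because the cubic term dominates. The sequence $\sup_{t\ge t_{n-1}}\|\gamma_\lambda^{(k)}(t)\|\le\sup_{m\ge n}M_{k,m}$ therefore tends to $0$, so $\gamma_\lambda^{(k)}(t)\to0$ as $t\to1$. Assume inductively that $\gamma_\lambda^{(k)}$ is continuous on $[0,1]$ with value $0$ at $1$. The mean value inequality in the finite-dimensional space $W$ gives $\|\gamma_\lambda^{(k)}(t)\|/(1-t)\le\sup_{[t,1)}\|\gamma_\lambda^{(k+1)}\|\to0$. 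Hence the one-sided derivative $\gamma_\lambda^{(k+1)}(1)$ exists, equals $0$, and $\gamma_\lambda^{(k+1)}$ is continuous at $1$. So $\gamma_\lambda\in C^\infty([0,1],W)$.

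Finally, the estimates combine into
\[
  \sup_{t\in[0,1]}\|\gamma_\lambda^{(k)}(t)\|\le\lambda\,C_k\sup_{n\ge1}2^{n(k+1)-n^3}.
\]
The supremum on the right is finite, so each seminorm is $O(\lambda)$ and $\gamma_\lambda\to0$ in $C^\infty([0,1],W)$. The only genuine obstacle is the uniformity of $C_k$ in $n$; this holds because the $b_n$ range over the finite set $S$ with bounded nilpotence indices.
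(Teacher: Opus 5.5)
Your proof is correct and follows the paper's argument. The common steps are a uniform-in-$n$ bound $C_k\lambda\varepsilon_n$ on the derivatives of $\eta_{n,\lambda}$ (from finiteness of $S$), the factor $\delta_n^{-k-1}$ from the affine rescaling, flatness of the pieces at the junctions $t_n$, and the resulting $O(\lambda)$ bound on every $C^k$-seminorm. Your mean-value-inequality induction at $t=1$ makes explicit the smooth extension with vanishing derivatives that the paper asserts directly from the estimate.
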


\begin{proof}
Fix a norm on $W$.  Since $S$ is finite, differentiation of
\eqref{eq:eta-expansion} gives, for every $r\geq0$, a constant $C_r$ such
that
\[
  \sup_{s\in[0,1]}\|\eta_{n,\lambda}^{(r)}(s)\|
  \leq C_r\lambda\varepsilon_n.
\]
After the affine reparametrization in \eqref{eq:gamma-definition},
\begin{equation}
  \sup_{t\in I_n}\|\gamma_\lambda^{(r)}(t)\|
  \leq C_r\lambda\varepsilon_n\delta_n^{-r-1}
  =C_r\lambda 2^{-n^3+n(r+1)}.
  \label{eq:pulse-derivative-estimate}
\end{equation}
The right-hand side tends to zero as $n\to\infty$.  Since $\chi$ is constant
near both endpoints, all derivatives of the pieces vanish there.  The pieces
therefore fit smoothly at every $t_n$, and
\eqref{eq:pulse-derivative-estimate} gives a smooth extension at $t=1$ with
all derivatives zero.  Taking the supremum over $n$ yields a bound
$C_r'\lambda$, proving convergence in the $C^\infty$ topology.
\end{proof}

Define
\[
  P_0(\lambda):=1,
  \qquad
  P_n(\lambda):=(1+\lambda\varepsilon_n b_n)P_{n-1}(\lambda).
\]
For $m\geq1$, let
\[
  \pi_m:\mathcal B_A=\mathbb C1\oplus\bigoplus_{j\geq1}A_j
  \longrightarrow A_m
\]
denote the canonical homogeneous projection; thus $\pi_m(1)=0$.

\begin{proposition}[Unique evolution before the accumulation time]
\label{prop:local-evolution}
The formula
\begin{equation}
  U_\lambda(t):=
  h_{n,\lambda}\left(\frac{t-t_{n-1}}{\delta_n}\right)
  P_{n-1}(\lambda),
  \qquad t\in I_n,
  \label{eq:local-evolution}
\end{equation}
defines the unique $H_A$-valued $C^1$ solution on $[0,1)$ of
$U_\lambda'=\gamma_\lambda U_\lambda$ with $U_\lambda(0)=1$.  This
solution is smooth and satisfies
\[
  U_\lambda(t_n)=P_n(\lambda).
\]
\end{proposition}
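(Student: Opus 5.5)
The plan has three parts: check that the formula \eqref{eq:local-evolution} is well defined and solves the equation on each $I_n$; check that the pieces glue smoothly at the nodes $t_n$; and prove uniqueness by reducing, on every compact $[0,T]\subseteq[0,1)$, to a linear ODE in a finite-dimensional space.

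\emph{Well-definedness and the equation on each $I_n$.} At the right end of $I_n$, where $\chi=1$ near $1$, the formula gives
\[
  U_\lambda(t_n)=(1+\lambda\varepsilon_nb_n)P_{n-1}(\lambda)=P_n(\lambda).
\]
At the left end of $I_{n+1}$, where $\chi=0$ near $0$, the formula gives $h_{n+1,\lambda}(0)P_n(\lambda)=P_n(\lambda)$. So the two definitions agree at each $t_n$, and $U_\lambda(0)=h_{1,\lambda}(0)=1$. Every value has the form $1+a$ with $a\in A$, so $U_\lambda$ takes values in $H_A$.

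On $I_n$, write $s=(t-t_{n-1})/\delta_n$. By the chain rule,
\[
  U_\lambda'(t)=\delta_n^{-1}h_{n,\lambda}'(s)P_{n-1}(\lambda)
  =\delta_n^{-1}\eta_{n,\lambda}(s)h_{n,\lambda}(s)P_{n-1}(\lambda)
  =\gamma_\lambda(t)U_\lambda(t).
\]
This uses the definition of $\eta_{n,\lambda}$ together with \eqref{eq:gamma-definition}.

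\emph{Smooth gluing.} Since $\chi$ is constant near $0$ and near $1$, all derivatives of order $\geq1$ of each piece vanish near both endpoints of $I_n$. On a neighbourhood of $t_n$ the curve is therefore locally constant, equal to $P_n(\lambda)$, so it is smooth across $t_n$. Each piece takes values in the finite-dimensional subspace $\operatorname{span}\{1,b_n\}\cdot P_{n-1}(\lambda)$ and is smooth there. Finite-dimensional subspaces carry their Euclidean topology, so smoothness into $\mathcal B_A$ follows.

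\emph{Uniqueness.} Let $V$ be any $H_A$-valued $C^1$ solution on $[0,1)$, and fix $T<1$. The image $V([0,T])$ is compact, hence bounded, so by Proposition~\ref{prop:topological-algebra} it lies in a finite-dimensional subspace $F$. Let $F'$ be the finite-dimensional span of $F\cup W F\cup U_\lambda([0,T])\cup WU_\lambda([0,T])$. Both $V$ and $U_\lambda$ then satisfy, in $F'$, the linear equation $X'=\gamma_\lambda X$ with $X(0)=1$. The map $X\mapsto\gamma_\lambda(t)X$ is linear and continuous on $F'$, and $\gamma_\lambda$ is continuous. Since $V$ and $U_\lambda$ are $C^1$ curves in the Euclidean space $F'$ (its subspace topology is Euclidean), the standard Gronwall argument gives $V=U_\lambda$ on $[0,T]$. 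As $T<1$ was arbitrary, uniqueness on $[0,1)$ follows.

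Alternatively, uniqueness can be proved by differentiating $U_\lambda^{-1}V$, using smoothness of inversion in the continuous inverse algebra from Theorem~\ref{thm:cia-and-principal}. The derivative is
\[
  -U_\lambda^{-1}\gamma_\lambda V+U_\lambda^{-1}\gamma_\lambda V=0,
\]
so $U_\lambda^{-1}V$ is constant, equal to $1$. This is cleaner. The only care needed is the product rule, which holds because multiplication is continuous bilinear, and the derivative of inversion, $-u^{-1}u'u^{-1}$.

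The main obstacle is this uniqueness step: ODE uniqueness in a general locally convex space is not automatic. The plan handles it through the algebraic identity $(U_\lambda^{-1}V)'=0$. This avoids any ODE theory, needing only that a $C^1$ curve with zero derivative is constant, which follows by applying continuous functionals.
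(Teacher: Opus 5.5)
Your proposal is correct and follows the paper's proof. Both arguments verify the equation piecewise from $h_{n,\lambda}'=\eta_{n,\lambda}h_{n,\lambda}$ under the affine reparametrization, glue the pieces using constancy of $\chi$ near $0$ and $1$, and obtain uniqueness from the vanishing derivative of a quotient of the two solutions together with separation of points by continuous linear functionals. The differences are cosmetic: you differentiate $U_\lambda^{-1}V$ where the paper uses $V^{-1}U_\lambda$, and since you invert only the known solution you even get uniqueness among all $\mathcal B_A$-valued solutions. Your Gronwall variant also works, provided you measure in a norm on the larger space $F'+WF'$, because multiplication by $\gamma_\lambda(t)$ need not map $F'$ into itself.
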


\begin{proof}
On $I_n$, the identity
$h_{n,\lambda}'=\eta_{n,\lambda}h_{n,\lambda}$ and the time
reparametrization give $U_\lambda'=\gamma_\lambda U_\lambda$.  The endpoint
relations $h_{n,\lambda}(0)=1$ and
$h_{n,\lambda}(1)=1+\lambda\varepsilon_n b_n$ make the pieces agree.  Since
$h_{n,\lambda}$ is constant near both endpoints, the resulting path is smooth
on $[0,1)$.

If $V:[0,1)\to H_A$ is another $C^1$ solution with the same initial value,
then
\[
  (V^{-1}U_\lambda)'=0.
\]
Continuous linear functionals separate points, so $V^{-1}U_\lambda$ is
constant.  Its value at $0$ is $1$, and hence $V=U_\lambda$.
\end{proof}

\begin{theorem}[Absence of a global $C^1$ evolution]
\label{thm:no-evolution}
For every $0<\lambda\leq1$, the initial-value problem
\[
  U'(t)=\gamma_\lambda(t)U(t),
  \qquad U(0)=1,
\]
has no $C^1$ solution with values in $G_A$, and hence none with values in
$H_A$.
\end{theorem}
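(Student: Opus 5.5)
Suppose, for a contradiction, that $U:[0,1]\to G_A$ is a $C^1$ solution of $U'=\gamma_\lambda U$ with $U(0)=1$. The plan is to show that $U$ must agree with the explicit solution of Proposition~\ref{prop:local-evolution} on $[0,1)$, so that continuity at $t=1$ forces the sequence $P_n(\lambda)$ to converge. That convergence is then ruled out by a degree count.

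\textbf{Step 1 (uniqueness in $G_A$).} Proposition~\ref{prop:local-evolution} proves uniqueness only among $H_A$-valued solutions, but its argument works verbatim in $G_A$. Put $V:=U^{-1}U_\lambda$ on $[0,1)$, where $U^{-1}$ is $C^1$ because inversion in $G_A$ is analytic (Theorem~\ref{thm:cia-and-principal}). Multiplication is jointly continuous (Proposition~\ref{prop:topological-algebra}), so the product rule applies. Since $(U^{-1})'=-U^{-1}\gamma_\lambda$, we get $V'=-U^{-1}\gamma_\lambda U_\lambda+U^{-1}\gamma_\lambda U_\lambda=0$. Hence $V\equiv1$ and $U=U_\lambda$ on $[0,1)$. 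In particular $U(t_n)=P_n(\lambda)$ for every $n$.

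\textbf{Step 2 (boundedness).} The set $U([0,1])$ is compact, hence bounded. By Proposition~\ref{prop:topological-algebra} it lies in a finite-dimensional subspace $F\subseteq\mathcal B_A$. Every element of $\mathcal B_A$ has only finitely many nonzero homogeneous components, so a basis of $F$ involves only degrees $\le M$ for some $M$. Therefore $\pi_m(x)=0$ for all $x\in F$ and all $m>M$. Note that convergence of $P_n(\lambda)$ is not even needed; boundedness of the points $P_n(\lambda)=U(t_n)$ suffices.

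\textbf{Step 3 (top-degree computation).} We show that $\pi_n(P_n(\lambda))=\lambda^n\varepsilon_1\cdots\varepsilon_n w_n$. Expand the product $(1+\lambda\varepsilon_nb_n)\cdots(1+\lambda\varepsilon_1b_1)$. Each $b_j$ lies in $A_1$, so a term that picks $b_j$ from a subset $J$ of the factors has degree $|J|$. The only term of degree $n$ is the one with $J=\{1,\dots,n\}$, which equals $\lambda^n\varepsilon_1\cdots\varepsilon_n\,b_n\cdots b_1$. This is nonzero by Lemma~\ref{lem:branch}. Choosing $n=M+1$ then gives $P_n(\lambda)\in F$ with $\pi_n(P_n(\lambda))\neq0$, contradicting Step 2.

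\textbf{Main obstacle.} The only delicate point is Step 1: the uniqueness argument must be justified for $G_A$-valued $C^1$ curves, which needs the product rule in a locally convex algebra with jointly continuous multiplication and the $C^1$-smoothness of inversion. The conclusion $V'=0\Rightarrow V$ constant follows by applying continuous linear functionals, exactly as in Proposition~\ref{prop:local-evolution}. Once uniqueness holds, the $H_A$ case follows immediately, since $H_A\subseteq G_A$.
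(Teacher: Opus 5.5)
Your proposal is correct and follows essentially the paper's route: uniqueness forces $U(t_n)=P_n(\lambda)$, boundedness places these points in a finite-dimensional subspace involving only finitely many homogeneous degrees, and the top-degree term $\lambda^n\varepsilon_1\cdots\varepsilon_nw_n\neq0$ gives the contradiction. The differences are cosmetic. The paper first shows that $U$ is $H_A$-valued via $(\pi_0\circ U)'=\pi_0(\gamma_\lambda U)=0$ and then quotes the $H_A$-uniqueness of Proposition~\ref{prop:local-evolution}, whereas you rerun the uniqueness computation directly in $G_A$; and you obtain boundedness from compactness of $U([0,1])$ rather than from the convergence $U(t_n)\to U(1)$.
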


\begin{proof}
Suppose that $U:[0,1]\to G_A$ is such a solution.  The scalar projection
satisfies
\[
  (\pi_0\circ U)'(t)=\pi_0(\gamma_\lambda(t)U(t))=0,
  \qquad \pi_0(U(0))=1,
\]
so $U(t)\in H_A$ for all $t$.  The restriction of $U$ to $[0,1)$ and the
path $U_\lambda$ from Proposition~\ref{prop:local-evolution} solve the same
initial-value problem.  The uniqueness assertion in that proposition gives
$U=U_\lambda$ on $[0,1)$.  In particular,
\begin{equation}
  U(t_n)=P_n(\lambda)
  =(1+\lambda\varepsilon_n b_n)\cdots
   (1+\lambda\varepsilon_1b_1).
  \label{eq:forced-endpoints}
\end{equation}

Each factor in \eqref{eq:forced-endpoints} has homogeneous degrees $0$ and
$1$.  A term of degree $n$ must select the nonconstant term from every one of
the $n$ factors.  Consequently,
\begin{equation}
  \pi_n(P_n(\lambda)-1)
  =\lambda^n\varepsilon_1\cdots\varepsilon_nw_n\neq0.
  \label{eq:new-degree}
\end{equation}
There are no further degree-$n$ terms.

If $(P_n(\lambda))$ converged in $\mathcal B_A$, then it would be bounded.
Proposition~\ref{prop:topological-algebra} would place it in a
finite-dimensional subspace $F$.  A finite basis of $F$ involves only
finitely many homogeneous degrees, so there is an $N$ such that
\[
  F\subseteq\mathbb C1\oplus\bigoplus_{j=1}^N A_j.
\]
This contradicts \eqref{eq:new-degree} for $n>N$.  Thus
$(P_n(\lambda))$ has no limit.  Since $t_n\to1$, continuity of $U$ would give
$P_n(\lambda)=U(t_n)\to U(1)$, a contradiction.
\end{proof}

\begin{corollary}
\label{cor:all-k}
For every $k\in\mathbb N\cup\{\infty\}$, the groups $H_A$ and $G_A$ are
not $C^k$-semiregular.  Every neighbourhood of the zero curve in
$C^k([0,1],W)$ contains a smooth curve admitting neither an $H_A$-valued nor
a $G_A$-valued right evolution.
\end{corollary}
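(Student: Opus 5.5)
The plan is to derive Corollary~\ref{cor:all-k} directly from Theorem~\ref{thm:no-evolution} and Lemma~\ref{lem:smooth-pulses}, and then to translate between the right convention \eqref{eq:right-evolution} and the left convention used in the definition of semiregularity.

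\emph{Neighbourhood statement.} Fix $k\in\mathbb N\cup\{\infty\}$ and a neighbourhood $\mathcal N$ of $0$ in $C^k([0,1],W)$. The inclusion $C^\infty([0,1],W)\hookrightarrow C^k([0,1],W)$ is continuous, because the $C^k$ seminorms are among the $C^\infty$ seminorms. Lemma~\ref{lem:smooth-pulses} gives $\gamma_\lambda\to0$ in $C^\infty$, so $\gamma_\lambda\in\mathcal N$ for all sufficiently small $\lambda>0$. By Theorem~\ref{thm:no-evolution}, such a $\gamma_\lambda$ has no $C^1$ right evolution in $G_A$ or in $H_A$. This proves the second sentence of the corollary.

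\emph{Failure of semiregularity.} Since $W\subseteq A=\mathbf L(H_A)\subseteq\mathcal B_A=\mathbf L(G_A)$ and $\gamma_\lambda$ is smooth, we have $\gamma_\lambda\in C^k([0,1],\mathbf L(H))$ for every $k$ and each $H\in\{H_A,G_A\}$. It therefore suffices to show that $\xi:=-\gamma_\lambda$ has no left evolution. In the unit group of the algebra, $T_eL_g x=gx$ and $T_eR_g x=xg$. Suppose $\Gamma$ were a $C^1$ (or $C^k$, $k\ge1$) curve with $\Gamma'=\Gamma\xi$ and $\Gamma(0)=1$. Put $U:=\Gamma^{-1}$; this is $C^1$ because inversion on $G_A$ is analytic (Theorem~\ref{thm:cia-and-principal}). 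Differentiating $\Gamma U=1$ gives
\[
  U'=-U\Gamma'U=-\xi U=\gamma_\lambda U,
\]
which contradicts Theorem~\ref{thm:no-evolution}.

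\emph{The case $k=0$.} For $k=0$ an evolution is by definition still a $C^1$ curve; only the control is merely continuous. Since $\gamma_\lambda$ is in particular continuous, the same argument applies. Nothing beyond careful bookkeeping between the two conventions is needed, and that bookkeeping is the only point requiring attention.
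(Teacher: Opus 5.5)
Your proposal is correct and follows essentially the same route as the paper. Both arguments use $\gamma_\lambda\to0$ in $C^\infty$, hence in every $C^k$ topology, and Theorem~\ref{thm:no-evolution} to exclude any $C^1$ (in particular any $C^{k+1}$) right evolution, then apply inversion to pass to the left convention. The only difference is that you write out the conversion $U=\Gamma^{-1}$ with $\xi=-\gamma_\lambda$ explicitly, where the paper states it in one line.
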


\begin{proof}
The curves $\gamma_\lambda$ are smooth and converge to zero in the
$C^\infty$ topology, hence also in every $C^k$ topology.  A
$C^{k+1}$ evolution would be a $C^1$ solution, which
Theorem~\ref{thm:no-evolution} excludes.  Inversion converts the right
evolution statement to the standard left convention.
\end{proof}

\begin{corollary}[Instability of the evolution domain]
\label{cor:evolution-domain}
Let
\[
  \mathcal D_W:=\{\gamma\in C^\infty([0,1],W):
  U'=\gamma U,\ U(0)=1,
  \text{ has an $H_A$-valued $C^1$ solution}\}.
\]
Then $0\in\mathcal D_W$, the zero curve is not an interior point of
$\mathcal D_W$, and $\mathcal D_W$ is not closed in
$C^\infty([0,1],W)$.  The endpoint map, with $\mathcal D_W$ carrying the
subspace topology,
\[
  \operatorname{evol}_W:\mathcal D_W\to H_A,
  \qquad \gamma\mapsto U_\gamma(1),
\]
is well defined and is not continuous at $0$.
\end{corollary}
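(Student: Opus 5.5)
The corollary follows by combining Lemma~\ref{lem:smooth-pulses}, Proposition~\ref{prop:local-evolution} and Theorem~\ref{thm:no-evolution}; I would check its four assertions in turn.

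\emph{Zero lies in the domain.} The zero curve has the constant solution $U\equiv1$. Hence $0\in\mathcal D_W$.

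\emph{Zero is not an interior point.} By Lemma~\ref{lem:smooth-pulses}, $\gamma_\lambda\to0$ in $C^\infty([0,1],W)$ as $\lambda\downarrow0$. By Theorem~\ref{thm:no-evolution}, no $\gamma_\lambda$ lies in $\mathcal D_W$. So every neighbourhood of $0$ contains a point outside $\mathcal D_W$.

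\emph{The endpoint map is well defined.} A $C^1$ solution on $[0,1]$ is unique by the argument of Proposition~\ref{prop:local-evolution}: for two solutions $U,V$ one has $(V^{-1}U)'=0$, and continuous functionals separate points. So $U_\gamma(1)$ is determined by $\gamma$.

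\emph{Non-closedness and discontinuity at $0$.} Here I need a family in $\mathcal D_W$ converging to a curve outside $\mathcal D_W$, and for discontinuity a family in $\mathcal D_W$ converging to $0$ whose endpoints do not converge to $1$. I would use truncations of the pulse control. For $N\geq1$ set $\gamma_{\lambda,N}:=\gamma_\lambda$ on $[0,t_N]$ and $\gamma_{\lambda,N}:=0$ on $[t_N,1]$. This curve is smooth because every piece is flat at the $t_n$. Its evolution is $U_\lambda$ on $[0,t_N]$ and is then constant equal to $P_N(\lambda)$, so $\gamma_{\lambda,N}\in\mathcal D_W$ with $\operatorname{evol}_W(\gamma_{\lambda,N})=P_N(\lambda)$.

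For non-closedness, fix $\lambda=1$. The estimate \eqref{eq:pulse-derivative-estimate} bounds the $C^r$ norm of $\gamma_1-\gamma_{1,N}$ by $\sup_{n>N}C_r2^{-n^3+n(r+1)}$, which tends to $0$ as $N\to\infty$. Thus $\gamma_{1,N}\to\gamma_1\notin\mathcal D_W$, so $\mathcal D_W$ is not closed.

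For discontinuity, this is the delicate step: one cannot take $\lambda$ to $0$ with $N$ fixed, because then $P_N(\lambda)\to1$. Instead I would use that $P_n(1)$ does not converge. Suppose $\operatorname{evol}_W$ were continuous at $0$. Pick a $0$-neighbourhood $\mathcal V$ in $\mathcal B_A$ and a $C^\infty$-neighbourhood $\mathcal O$ of $0$ with $\operatorname{evol}_W(\mathcal O\cap\mathcal D_W)\subseteq1+\mathcal V$.

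The tails of $\gamma_1$ give the needed small curves. For $M<N$ let $\gamma^{M,N}$ be $\gamma_1$ on $[t_M,t_N]$ and zero elsewhere. This curve lies in $\mathcal D_W$, with endpoint $P_N(1)P_M(1)^{-1}$, by right-invariance of the equation $U'=\gamma U$. By the estimate above, $\gamma^{M,N}$ lies in $\mathcal O$ for all $N>M\geq M_0(\mathcal O)$. Then $P_N(1)P_M(1)^{-1}-1\in\mathcal V$ for all such $N>M$.

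It remains to derive a contradiction from this. The target is a bounded-set contradiction, as in Theorem~\ref{thm:no-evolution}. One option is to choose $\mathcal V$ absolutely convex with seminorm $p$, and to take $p$ as in Proposition~\ref{prop:topological-algebra}, forcing the degree-$n$ component $\varepsilon_{M+1}\cdots\varepsilon_n\,b_n\cdots b_{M+1}$ to have large $p$-value. The difficulty is that these words may vanish.

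A simpler route avoids this. Applying the left-hand construction instead, $\gamma_{\lambda,N}$ with $\lambda\to0$ is controlled, while the tails give $\operatorname{evol}(\gamma^{0,N})=P_N(1)$. So I would instead scale time: take $\mathcal V$ to be the unit ball of a seminorm $p$ with $p(\pi_nx)\geq n\,c_n^{-1}\|\pi_n x\|$ for $c_n:=\lambda_n^n\varepsilon_1\cdots\varepsilon_n\|w_n\|$. Such a $p$ exists because the homogeneous projections are continuous and $\tau_{\mathrm f}$ admits all seminorms. Choose $\lambda_n\to0$, so that $\gamma_{\lambda_n,n}\to0$ in $C^\infty$. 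By \eqref{eq:new-degree}, $p(P_n(\lambda_n)-1)\geq n$, so $\operatorname{evol}_W(\gamma_{\lambda_n,n})\not\to1$. This diagonal choice of seminorm is the main obstacle; everything else is routine.
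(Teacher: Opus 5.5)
Your final argument is correct and is essentially the paper's: the paper also uses the diagonal truncations $\gamma_{\lambda_N,N}$ with $\lambda_N\to0$ (specifically $\lambda_N=2^{-N}$) and the nonzero degree-$N$ component $\lambda_N^N\varepsilon_1\cdots\varepsilon_Nw_N$ of $P_N(\lambda_N)$. The only difference is that the paper witnesses unboundedness through Proposition~\ref{prop:topological-algebra} (bounded sets are finite-dimensional), whereas you build the seminorm $p(x):=\sum_n n\,c_n^{-1}\|\pi_nx\|$ by hand. Two small corrections: the property you actually need is $p(x)\geq n\,c_n^{-1}\|\pi_nx\|$ for every $x$ (which this sum provides), not merely $p(\pi_nx)\geq n\,c_n^{-1}\|\pi_nx\|$; and your abandoned tail route did not fail for the reason you gave, because $b_n\cdots b_{M+1}$ is a left factor of $w_n\neq0$ and so cannot vanish.
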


\begin{proof}
The uniqueness calculation in the proof of
Proposition~\ref{prop:local-evolution} applies to any two $H_A$-valued
$C^1$ solutions of the same right-evolution equation; hence
$\operatorname{evol}_W$ is well defined.  Moreover, a $C^1$ solution for a
smooth control is automatically smooth.  Indeed, in the affine chart
$H_A\cong A$, write $U=1+u$.  The equation becomes
\[
  u'=\gamma+\gamma u.
\]
Since multiplication is continuous bilinear, its right-hand side is $C^m$
whenever $u$ is $C^m$; induction gives $u\in C^m$ for every $m$.
Thus $\mathcal D_W$ is precisely the usual smooth right-evolution domain
restricted to $C^\infty([0,1],W)$.

The inclusion $0\in\mathcal D_W$ is immediate, and
Corollary~\ref{cor:all-k} shows that $0$ is not an interior point.  Fix
$\lambda>0$.  For $N\geq1$, define
\[
  \gamma_{\lambda,N}(t):=
  \begin{cases}
    \gamma_\lambda(t),&0\leq t\leq t_N,\\
    0,&t_N\leq t\leq1.
  \end{cases}
\]
The pulse construction is identically zero near every joining point, so
$\gamma_{\lambda,N}$ is smooth.  It admits the global evolution
\[
  U_{\lambda,N}(t):=
  \begin{cases}
    U_\lambda(t),&0\leq t\leq t_N,\\
    P_N(\lambda),&t_N\leq t\leq1.
  \end{cases}
\]
For every derivative order $r$, estimate
\eqref{eq:pulse-derivative-estimate} shows that the $C^r$-norm of the omitted
tail tends to zero.  Hence
$\gamma_{\lambda,N}\to\gamma_\lambda$ in
$C^\infty([0,1],W)$.  The limit does not belong to $\mathcal D_W$ by
Theorem~\ref{thm:no-evolution}; thus $\mathcal D_W$ is not closed.

For endpoint continuity, take $\lambda_N:=2^{-N}$ and
$\xi_N:=\gamma_{\lambda_N,N}$.  The uniform bounds in
Lemma~\ref{lem:smooth-pulses} imply $\xi_N\to0$ in
$C^\infty([0,1],W)$, while
\[
  \operatorname{evol}_W(\xi_N)=P_N(\lambda_N).
\]
The set $\{P_N(\lambda_N):N\geq1\}$ is not bounded: if it were bounded,
Proposition~\ref{prop:topological-algebra} would place it in a fixed
finite-dimensional subspace, whereas
\[
  \pi_N(P_N(\lambda_N)-1)
  =\lambda_N^N\varepsilon_1\cdots\varepsilon_Nw_N\neq0
\]
introduces a new homogeneous degree for every $N$.  Therefore the endpoints
do not converge to the identity, and $\operatorname{evol}_W$ is not
continuous at $0$.
\end{proof}

\begin{remark}
\label{rem:accumulation}
Proposition~\ref{prop:local-evolution} gives a unique smooth evolution on
every compact subinterval of $[0,1)$.  The global failure is produced by the
finite-time accumulation of these valid local evolutions: their endpoints
$P_n(\lambda)$ acquire one new homogeneous degree at each step and have no
limit as $t_n\uparrow1$.
\end{remark}

\begin{proof}[Proof of Theorem~\ref{thm:graded-escape}]
Theorem~\ref{thm:cia-and-principal} and
Corollary~\ref{cor:global-exponential} provide the complete continuous
inverse algebra, the complete Silva model, the analytic BCH--Lie groups
$H_A$ and $G_A$, the contractibility of $H_A$, and the global exponential
homeomorphism on $H_A$.

Lemma~\ref{lem:smooth-pulses} and
Proposition~\ref{prop:local-evolution} provide the smooth finite-dimensional
controls and their unique smooth evolutions on $[0,1)$.  Theorem~\ref{thm:no-evolution}
and Corollary~\ref{cor:all-k} give the failure of global evolution and all
semiregularity assertions.
\end{proof}

\section{Growth and classification}
\label{sec:consequences}

\begin{theorem}[Regularity classification]
\label{thm:classification}
Let $A=\bigoplus_{n\geq1}A_n$ be a positively graded complex nil algebra
generated by its finite-dimensional component $A_1$.  For
$\mathcal B_A=\mathbb C1\oplus A$ with the finest Hausdorff locally convex
topology, the following conditions are equivalent:
\begin{enumerate}[label=\textup{(\roman*)}]
\item $A$ is finite-dimensional;
\item $H_A$ is $C^0$-semiregular;
\item $G_A$ is $C^0$-semiregular;
\item $H_A$ is regular in the sense of Milnor;
\item $G_A$ is regular in the sense of Milnor;
\item $\mathcal B_A$ satisfies the Gl\"ockner--Neeb multiplication-growth
condition.
\end{enumerate}
If $A$ is infinite-dimensional, both $H_A$ and $G_A$ fail
$C^k$-semiregularity for every $k\in\mathbb N\cup\{\infty\}$.
\end{theorem}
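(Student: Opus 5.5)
The plan is to prove the cycle (i)$\Rightarrow$(iv)$\Rightarrow$(ii)$\Rightarrow$(i), and in parallel (i)$\Rightarrow$(v)$\Rightarrow$(iii)$\Rightarrow$(i) together with (i)$\Leftrightarrow$(vi). The final sentence of the theorem is exactly Corollary~\ref{cor:all-k}. The negative directions come almost for free from Section~\ref{sec:control}. If $A$ is infinite-dimensional, Theorem~\ref{thm:no-evolution} and Corollary~\ref{cor:all-k} show that neither $H_A$ nor $G_A$ is $C^0$-semiregular; this gives (ii)$\Rightarrow$(i) and (iii)$\Rightarrow$(i) by contraposition. The implications (iv)$\Rightarrow$(ii) and (v)$\Rightarrow$(iii) follow by the standard hierarchy: Milnor regularity means $C^\infty$-regularity, hence $C^\infty$-semiregularity. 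Here I must be careful, because $C^0$-semiregularity is \emph{stronger} than $C^\infty$-semiregularity, so this route does not close the loop. Instead I will route the cycle through $C^\infty$-semiregularity: (iv)$\Rightarrow$ $C^\infty$-semiregular $\Rightarrow$(i) by Corollary~\ref{cor:all-k} with $k=\infty$, and (i)$\Rightarrow$(ii) directly.

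For the positive direction, assume $A$ is finite-dimensional. By Lemma~\ref{lem:uniform-nilpotence} applied to $F=A$, together with the grading, $A$ is nilpotent: $A_n=0$ for large $n$, so $A^N=0$ for some $N$. Then $\mathcal B_A$ is a finite-dimensional unital algebra, and $G_A$, $H_A$ are finite-dimensional Lie groups, in fact unipotent for $H_A$. Finite-dimensional Lie groups are $C^0$-regular by ODE theory, hence $C^0$-semiregular and Milnor regular. For an explicit argument, I would note that the evolution of a continuous $\gamma$ in $H_A$ is the finite Dyson series $1+\sum_{k<N}\int_{t>s_1>\cdots>s_k}\gamma(s_1)\cdots\gamma(s_k)$, which is polynomial and hence smooth in $\gamma$. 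For $G_A\cong\mathbb C^\times\times H_A$, I would use Theorem~\ref{thm:cia-and-principal}, since products of regular groups are regular. This yields (i)$\Rightarrow$(ii)--(v).

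For (vi), recall the Gl\"ockner--Neeb condition: for each continuous seminorm $p$ there are a continuous seminorm $q$ and $M>0$ with $p(x_1\cdots x_n)\le M^n q(x_1)\cdots q(x_n)$ for all $n$ and all $x_i$. When $A$ is finite-dimensional, $\mathcal B_A$ is a finite-dimensional normed algebra. Renormalizing to a submultiplicative norm $\|\cdot\|$ gives the condition with $q=C\|\cdot\|$. For (vi)$\Rightarrow$(i), I would invoke the Gl\"ockner--Neeb theorem quoted after Corollary~\ref{cor:cia-consequence}: a Mackey-complete continuous inverse algebra satisfying the condition has a regular unit group. This gives (vi)$\Rightarrow$(v), which is already known to imply (i).

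The main obstacle is merely bookkeeping about which semiregularity level sits where, together with confirming the precise form of the growth condition and the hypotheses of the cited regularity theorem. If the citation is unavailable, I would fall back on a direct argument. Under the growth condition, the product $\|P_n(\lambda)\|$-type bounds would force the words $w_n$, weighted suitably, to stay bounded in a fixed seminorm. I would then build, as in the proof of Proposition~\ref{prop:topological-algebra}, a seminorm $p$ with weights growing super-exponentially on a basis adapted to the degrees. For this $p$, $p(b_n\cdots b_1)$ grows faster than $M^n\prod q(b_i)$ for any fixed $q$, since $q$ is bounded on the finite set $S$. This contradicts (vi) when $A$ is infinite-dimensional.
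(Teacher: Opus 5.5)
Your proposal is correct. For (i)--(v) it matches the paper. Finite dimensionality gives $C^0$-regularity by finite-dimensional theory. For infinite-dimensional $A$, the paper also invokes Corollary~\ref{cor:all-k} at $k=\infty$ to exclude Milnor regularity. That is exactly the hierarchy correction you made: your first-proposed step (iv)$\Rightarrow$(ii) is not a valid general implication, and you rightly discarded it.

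The genuine difference is in (vi)$\Rightarrow$(i).
\begin{itemize}
\item You obtain (vi)$\Rightarrow$(v) from the Gl\"ockner--Neeb regularity theorem \cite{GlocknerNeebCIA} and then close via non-regularity. This is legitimate: $\mathcal B_A$ is a complete, hence Mackey-complete, continuous inverse algebra by Theorem~\ref{thm:cia-and-principal}. Your $M^n$ form of the condition is equivalent to \eqref{eq:GN-condition}, since summability bounds $\|\mu_n\|_{p,q}$ by $Cr^{-n}$, and conversely.
\item The paper instead proves $\neg$(i)$\Rightarrow\neg$(vi) directly in Proposition~\ref{prop:growth}. The words $w_n\in A_n$ are linearly independent, so they extend to a Hamel basis carrying a continuous seminorm with $p(w_n)=e^{n^2}$. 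Testing with $M_q^{-1}b_n,\dots,M_q^{-1}b_1$ then gives $\|\mu_n\|_{p,q}^{1/n}\ge M_q^{-1}e^n\to\infty$.
\end{itemize}

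Your route is shorter given the citation. It also shows that failure of the growth condition is forced by the non-regularity itself. However, it makes the classification rest on an external theorem and its exact hypotheses. The paper's argument is elementary and self-contained. It is also quantitative: the growth is super-exponential, so no $r>0$ can work for the chosen $p$.

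Your fallback is essentially the paper's proof. In it, make sure the weights make $p(w_n)$ itself large, as the paper does by placing the $w_n$ in the basis. Weighting a homogeneous basis by degree alone does not control the size of the coefficients of $w_n$.
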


For $n\geq1$, let
\[
  \mu_n(x_1,\ldots,x_n):=x_1\cdots x_n.
\]
For continuous seminorms $p,q$ on $\mathcal B_A$, write
\[
  \|\mu_n\|_{p,q}
  :=\sup\{p(x_1\cdots x_n):q(x_j)\leq1\text{ for all }j\}.
\]
The Gl\"ockner--Neeb condition requires that, for every continuous seminorm
$p$, there exist a continuous seminorm $q$ and $r>0$ such that
\begin{equation}
  \sum_{n=1}^\infty r^n\|\mu_n\|_{p,q}<\infty
  \label{eq:GN-condition}
\end{equation}
\cite[p.~96]{GlocknerNeebCIA}.

\begin{proposition}
\label{prop:growth}
If $A$ is infinite-dimensional, then $\mathcal B_A$ does not satisfy
\eqref{eq:GN-condition}.  If $A$ is finite-dimensional, then it does.
\end{proposition}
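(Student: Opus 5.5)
For the infinite-dimensional case I will argue by contradiction, reusing the graded-escape mechanism from Section~\ref{sec:control}. Suppose \eqref{eq:GN-condition} holds. Fix a Hamel basis of $\mathcal B_A$ consisting of $1$ together with homogeneous basis vectors of each $A_m$, adapted to the grading. I will build a continuous seminorm $p$ designed to punish high degrees. Set
\[
  p\Bigl(\lambda1+\sum_m a_m\Bigr):=|\lambda|+\sum_m c_m\,\|a_m\|_m ,
\]
where $\|\cdot\|_m$ is a fixed norm on the finite-dimensional space $A_m$ and the weights $c_m>0$ are still to be chosen. Every seminorm is continuous for $\tau_{\mathrm f}$, so $p$ is admissible.

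Now let $q$ and $r>0$ be as in the condition. The point is that $q$ is a single continuous seminorm, so it is finite on the fixed finite set $S\subseteq A_1$ of generators. Put $M:=\max\{1,\max_{b\in S}q(b)\}$. Then $x_j:=b_j/M$ satisfies $q(x_j)\le1$ for the branch $(b_n)$ of Lemma~\ref{lem:branch}. Hence
\[
  \|\mu_n\|_{p,q}\ \ge\ p\bigl(M^{-n}b_n\cdots b_1\bigr)=M^{-n}c_n\|w_n\|_n .
\]
The seminorm $p$ was fixed before $q$ and $r$, so the quantifiers need care. I will choose $c_n:=n^n/\|w_n\|_n$, using $w_n\ne0$ from Lemma~\ref{lem:branch}. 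This choice does not depend on $q$ or $r$. It gives $r^n\|\mu_n\|_{p,q}\ge (rn/M)^n$, which does not tend to $0$ for any $r>0$ and any $M$. The series \eqref{eq:GN-condition} therefore diverges, a contradiction. (In the sum the terms are indexed so that $\mu_n$ takes $n$ arguments, which matches the word $w_n$ of length $n$.)

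For the finite-dimensional case I will use Lemma~\ref{lem:uniform-nilpotence}, or directly the grading. If $A$ is finite-dimensional then $A_m=0$ for $m>N$, so every product of more than $N$ elements of $A$ vanishes. A product of elements of $\mathcal B_A=\mathbb C1\oplus A$ expands multinomially. The difficulty is that scalar parts do not die, so $\|\mu_n\|_{p,q}$ need not vanish. I will bound it instead. Fix any norm $q$ on the finite-dimensional $\mathcal B_A$ such that $q(\lambda1+a)\ge|\lambda|$ and $\|\cdot\|$ is submultiplicative up to a constant. Expand $x_1\cdots x_n$ with $x_j=\lambda_j1+a_j$. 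Only subsets of at most $N$ indices can contribute non-scalar factors, so there are at most $\sum_{k\le N}\binom nk\le (n+1)^N$ nonzero terms. Each term is bounded by $C^N$ in $p$ once $q(x_j)\le1$. Thus $\|\mu_n\|_{p,q}\le C'(n+1)^N$ has polynomial growth, and any $r<1$ makes \eqref{eq:GN-condition} converge.

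The main obstacle is the quantifier order in the infinite-dimensional case. The key observation is that the single seminorm $p$ can encode super-exponential weights on the fixed words $w_n$ in advance. No choice of $q$ can compensate for them, because all the factors $b_n$ are drawn from the finite set $S$, on which $q$ is uniformly bounded.
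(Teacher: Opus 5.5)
Your proof is correct. The infinite-dimensional half is essentially the paper's argument. The paper also fixes $p$ before $q$ by putting super-exponential weight on the branch words, namely $p(w_n)=e^{n^2}$ on a Hamel basis extending $(w_n)$. It then uses only that $q$ is bounded on the finite set $S$, which gives $\|\mu_n\|_{p,q}\geq M_q^{-n}e^{n^2}$. Your graded norm with $c_n=n^n/\|w_n\|_n$ does the same job.

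There is one small slip in the ordering. With $x_j:=b_j/M$, the product $x_1\cdots x_n$ equals $M^{-n}b_1\cdots b_n$, which may vanish, since the branch only guarantees $b_n\cdots b_1\neq0$. You need $x_j:=b_{n+1-j}/M$, as in the paper's choice $M_q^{-1}b_n,\ldots,M_q^{-1}b_1$, to obtain $M^{-n}w_n$.

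For the finite-dimensional case the routes differ.
\begin{itemize}
\item The paper's route is shorter. It picks a norm with $\|xy\|\leq C\|x\|\|y\|$ and takes $q:=\|\cdot\|$. This gives $\|\mu_n\|_{p,q}\leq c_pC^{n-1}$, so any $r<1/C$ works, and nilpotence is never used.
\item Your multinomial expansion uses $A_m=0$ for $m>N$ and is also valid. It yields the stronger bound $\|\mu_n\|_{p,q}\leq C'(n+1)^N$, so every $r<1$ works. That extra strength is not needed for the proposition.
\end{itemize}
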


\begin{proof}
Assume first that $A$ is infinite-dimensional and use the branch from
Lemma~\ref{lem:branch}.  The elements $w_n\in A_n$ are linearly independent.
Extend them to a Hamel basis and define a seminorm $p$ by
$p(w_n)=e^{n^2}$ and weight $1$ on the remaining basis vectors.  It is
continuous for the finest locally convex topology.  Given a continuous
seminorm $q$, set
\[
  M_q:=\max\{1,q(b):b\in S\}.
\]
Using $M_q^{-1}b_n,\ldots,M_q^{-1}b_1$ in the defining supremum gives
\[
  \|\mu_n\|_{p,q}
  \geq M_q^{-n}p(w_n)
  =M_q^{-n}e^{n^2},
\]
and therefore
\[
  \|\mu_n\|_{p,q}^{1/n}\geq M_q^{-1}e^n\longrightarrow\infty.
\]
This is incompatible with \eqref{eq:GN-condition}.

If $A$ is finite-dimensional, choose a norm $\|\cdot\|$ and $C\geq1$ with
$\|xy\|\leq C\|x\|\|y\|$.  For every seminorm $p$, there is a constant
$c_p$ such that $p(x)\leq c_p\|x\|$.  Taking $q=\|\cdot\|$ gives
$\|\mu_n\|_{p,q}\leq c_pC^{n-1}$, so
\eqref{eq:GN-condition} holds whenever $rC<1$.
\end{proof}

\begin{proof}[Proof of Theorem~\ref{thm:classification}]
If $A$ is finite-dimensional, then $H_A$ and $G_A$ are finite-dimensional
Lie groups and hence $C^0$-regular
\cite[Theorem~6.3.4]{GlocknerNeebBook2026}.  In particular, both groups are
$C^0$-semiregular and regular in the sense of Milnor.
Proposition~\ref{prop:growth} gives the growth condition.  If $A$ is infinite-dimensional,
Corollary~\ref{cor:all-k} shows that neither $H_A$ nor $G_A$ is even
$C^\infty$-semiregular, and hence neither group is regular in the sense of
Milnor.  Proposition~\ref{prop:growth} excludes the growth condition.  This
proves all equivalences.
\end{proof}

\begin{proof}[Proof of Theorem~\ref{thm:existence}]
Choose an algebra $A$ from Corollary~\ref{cor:existence} and apply
Theorem~\ref{thm:graded-escape}.
\end{proof}

\begin{proof}[Proof of Corollary~\ref{cor:cia-consequence}]
Choose the same algebra $A$.  Theorem~\ref{thm:graded-escape} gives the
non-semiregular full unit group, and Proposition~\ref{prop:growth} gives the
failure of the multiplication-growth condition.
\end{proof}

\section{Suspension and the failure of exponentiation}
\label{sec:suspension}

Let $H:=H_A$ be the group from Theorem~\ref{thm:graded-escape}, write
$\mathfrak h:=\mathbf L(H)=A$, and fix the control $\gamma_1$ of
Section~\ref{sec:control} with $\lambda=1$.  Thus
\begin{equation}
  V'(t)=\gamma_1(t)V(t),
  \qquad V(0)=e_H,
  \label{eq:suspension-right-equation}
\end{equation}
has no $H$-valued $C^1$ solution on $[0,1]$.  Put
$\gamma:=-\gamma_1$.

\begin{lemma}[Periodic realization of the bad control]
\label{lem:periodic-bad-control}
The left evolution equation
\begin{equation}
  u'(t)=T_{e_H}L_{u(t)}\gamma(t),
  \qquad u(0)=e_H,
  \label{eq:left-bad-equation}
\end{equation}
has no $H$-valued $C^1$ solution.  Moreover, $\gamma$ descends to a smooth
map $\widetilde\gamma:S^1=\mathbb R/\mathbb Z\to\mathfrak h$ satisfying
$\widetilde\gamma([t])=\gamma(t)$ for $0\leq t\leq1$.
\end{lemma}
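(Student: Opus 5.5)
The proof has two independent parts: converting between right and left evolution, and periodizing the control.

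For the first assertion I will use the inversion trick from the introduction. Suppose $u:[0,1]\to H$ is a $C^1$ solution of \eqref{eq:left-bad-equation}. In the global affine chart $H=1+A$, left translation is multiplication in $\mathcal B_A$, so the equation reads $u'=u\gamma=-u\gamma_1$. Set $V:=u^{-1}$. Inversion on $H_A$ is analytic by Theorem~\ref{thm:cia-and-principal}, so $V$ is $C^1$. Differentiating $Vu=1$ with the continuous bilinear product gives
\[
  V'u+Vu'=0,
\]
hence $V'=-Vu'u^{-1}=-V(-u\gamma_1)u^{-1}$. This needs care: $V u\gamma_1 u^{-1}=\gamma_1 V$ holds because $Vu=1$ and $u^{-1}=V$. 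Indeed $V'=-Vu'V=V u\gamma_1 V=\gamma_1V$. So $V'=\gamma_1V$ with $V(0)=1$, which is a $C^1$ solution of \eqref{eq:suspension-right-equation}. This contradicts Theorem~\ref{thm:no-evolution} with $\lambda=1$.

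For the periodic descent, by Lemma~\ref{lem:smooth-pulses} the curve $\gamma_1$ is smooth on $[0,1]$ with values in $W$, and all derivatives vanish at $t=1$. Near $t=0$, the first piece is $\delta_1^{-1}\eta_{1,1}(2t)$ with $\eta_{1,1}$ a multiple of $\chi'$. Since $\chi$ is constant near $0$, $\gamma_1$ vanishes identically on a neighbourhood of $0$. So $\gamma$ vanishes to infinite order at both endpoints, with $\gamma(0)=\gamma(1)=0$. Define $\widetilde\gamma([t]):=\gamma(t-\lfloor t\rfloor)$; this is well defined on $\mathbb R/\mathbb Z$ because $\gamma(0)=\gamma(1)$.

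The only real check, and the main point, is smoothness at the gluing point $[0]$. The 1-periodic extension $\mathbb R\to W$ is smooth away from the integers. Near an integer it equals $0$ on the left side of a neighbourhood, since $\gamma$ vanishes near $0$ and hence near $1^-$ after shifting. On the right side it equals a function flat at the integer. One-sided derivatives of all orders therefore agree (all zero), so by the standard gluing lemma for $C^\infty$ curves into a finite-dimensional space the extension is smooth. Since $W\subseteq\mathfrak h$ carries its unique Hausdorff topology and the inclusion $W\hookrightarrow\mathfrak h$ is continuous linear, smoothness into $\mathfrak h$ follows. Finally, smooth maps on $S^1$ correspond to smooth 1-periodic maps on $\mathbb R$, so $\widetilde\gamma$ is smooth.
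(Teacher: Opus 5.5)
Your proposal is correct and follows the paper's proof essentially verbatim. Both set $V=u^{-1}$, so that $V'=-Vu'V=\gamma_1V$, which contradicts Theorem~\ref{thm:no-evolution} at $\lambda=1$; both then periodize $\gamma$ using that $\gamma_1$ vanishes identically near $0$ and is flat at $1$. One small slip: the sides are swapped in your gluing step. Near an integer, the periodic extension is identically zero on the \emph{right}, coming from $\gamma$ near $0^+$. On the left it is merely flat, coming from $\gamma$ near $1^-$, where $\gamma_1$ does not vanish identically because the pulses accumulate at $1$. The conclusion that all one-sided derivatives vanish is unaffected.
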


\begin{proof}
If $u$ solved \eqref{eq:left-bad-equation}, then, in the unit group of
$\mathcal B_A$, the curve $V:=u^{-1}$ would satisfy
\[
  V'=-Vu'V=-V(u\gamma)V=\gamma_1V,
  \qquad V(0)=e_H,
\]
contradicting \eqref{eq:suspension-right-equation}.  By construction,
$\gamma_1$ vanishes near $0$, and Lemma~\ref{lem:smooth-pulses} shows that
all its derivatives vanish at $1$.  Hence $\gamma$ has matching zero jets at
the endpoints, so its $1$-periodic extension to $\mathbb R$ is smooth and
factors through $S^1$.
\end{proof}

Set
\[
  G:=C^\infty(S^1,H),
  \qquad
  \mathfrak g:=C^\infty(S^1,\mathfrak h),
\]
with pointwise multiplication, and let $\mathbb R$ act by translations,
\begin{equation}
  \alpha_z(F)([s]):=F([s+z]).
  \label{eq:translation-action}
\end{equation}

\begin{proposition}[The suspension group]
\label{prop:suspension-group}
The mapping group $G$ is a Lie group modelled on the complete Hausdorff
locally convex space $\mathfrak g$, with $\mathbf L(G)=\mathfrak g$.  The
maps
\[
  \operatorname{ev}_s:G\to H,
  \qquad F\mapsto F(s),
\]
are smooth homomorphisms, and \eqref{eq:translation-action} is a smooth action
by Lie group automorphisms.  Consequently,
\[
  \widetilde G:=G\rtimes_\alpha\mathbb R,
  \qquad
  (F,z)(K,w)=\bigl(F\alpha_z(K),z+w\bigr),
\]
is a Lie group modelled on the complete Hausdorff space
$\mathfrak g\times\mathbb R$.  Under the vector-space identification
$\mathbf L(\widetilde G)=\mathfrak g\times\mathbb R$, one has
\begin{align}
  \bigl(\mathbf L(\alpha_z)X\bigr)([s])&=X([s+z]),
  \label{eq:derived-translation-action}\\
  T_{e_{\widetilde G}}L_{(F,z)}(X,a)
  &=\bigl(T_{e_G}L_F(\mathbf L(\alpha_z)X),a\bigr).
  \label{eq:left-translation-semidirect}
\end{align}
\end{proposition}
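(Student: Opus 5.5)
We rely on the standard theory of Lie groups of mappings on compact manifolds, which applies here because $H$ has a global chart. By Theorem~\ref{thm:cia-and-principal}, the affine map $A\to H$, $a\mapsto 1+a$, is a global analytic chart. In this chart, multiplication is $a*b=a+b+ab$ and inversion is analytic. Hence $G=C^\infty(S^1,H)$ can be identified, as a set, with the vector space $\mathfrak g=C^\infty(S^1,A)$, carrying the usual smooth compact-open topology. We take this identification as a single global chart for $G$.

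Completeness of $\mathfrak g$ follows from completeness of $A$ (Lemma~\ref{lem:model-space}). Indeed, $C^\infty(S^1,E)$ is complete whenever $E$ is complete, because it is a closed subspace of a countable product of the spaces $C(S^1,E)$ via the derivatives. Alternatively, one can cite the corresponding result in \cite{GlocknerNeebBook2026}.

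In the chart, the group operations of $G$ are pushforwards. Multiplication is $(f,g)\mapsto f+g+fg$, which is smooth because the pointwise product $C^\infty(S^1,A)^2\to C^\infty(S^1,A)$ is continuous bilinear. This continuity uses joint continuity of multiplication on $A$ (Proposition~\ref{prop:topological-algebra}) together with the Leibniz rule for the $C^r$ seminorms. Inversion is $f\mapsto \iota\circ f$, where $\iota$ is the smooth inversion of $H$ in the chart. The standard fact that $f\mapsto \phi\circ f$ is smooth on $C^\infty(K,E)$ for smooth $\phi:E\to F$ and compact $K$ then gives smoothness of inversion. The Lie algebra is $\mathfrak g$, with the pointwise bracket. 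Each evaluation $\operatorname{ev}_s$ is continuous linear in the chart, hence smooth, and it is a homomorphism.

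For the action, each $\alpha_z$ is continuous linear on $\mathfrak g$ in the chart and preserves pointwise products, so it is a Lie group automorphism. Its derivative is $\alpha_z$ itself, which is \eqref{eq:derived-translation-action}. The main obstacle is smoothness of the joint map $\mathbb R\times G\to G$, $(z,F)\mapsto F(\cdot+z)$. We plan to prove this via the exponential law $C^\infty(\mathbb R\times G,\mathfrak g)\cong C^\infty(\mathbb R\times G\times S^1,A)$, which is valid because $S^1$ is compact (Glöckner's exponential law for $C^{\infty}$ maps with compact finite-dimensional factor). The adjoint map is $(z,F,s)\mapsto \operatorname{ev}(F,s+z)$. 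The evaluation map $\mathfrak g\times S^1\to A$ is smooth, so the adjoint is smooth, and hence the action is smooth. The standard construction of semidirect products for smooth actions by automorphisms then makes $\widetilde G$ a Lie group modelled on $\mathfrak g\times\mathbb R$, using the product chart, and $\mathfrak g\times\mathbb R$ is complete.

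Finally, we verify \eqref{eq:left-translation-semidirect} by differentiation. Take $(K,w)=(\exp$-type curve$)$, or more simply the curve $t\mapsto (tX,ta)$ in the chart, passing through the identity with velocity $(X,a)$. Then $(F,z)(tX,ta)=(F\alpha_z(tX),z+ta)$. Differentiating at $t=0$ gives $\bigl(T_eL_F(\alpha_z X),a\bigr)$, since $\alpha_z$ is linear in the chart. This identifies $\mathbf L(\alpha_z)$ with $\alpha_z$ and yields the formula.
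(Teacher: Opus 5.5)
Your proposal is correct and follows the paper's outline: the mapping-group structure on $G$, the exponential law applied to $(z,F,s)\mapsto F(s+z)$ for joint smoothness of the translation action, the standard semidirect-product construction, and differentiating $(K,w)\mapsto(F\alpha_z(K),z+w)$ at the identity. The only difference is that the paper cites the compact-source mapping-group theorem of Neeb--Wagemann, whereas you verify those facts directly in the global affine chart $H\cong A$ (continuous bilinear pointwise product, inversion via the pushforward lemma, completeness of $C^\infty(S^1,A)$), which is a valid and somewhat more self-contained specialization.
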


\begin{proof}
The assertions about $G$, its Lie algebra, and completeness follow from the
standard compact-source mapping-group theorem
\cite[Theorems~I.2(1) and I.3(1)]{NeebWagemann2007}.  Evaluation is smooth
and has differential $X\mapsto X(s)$.  Joint smoothness of $\alpha$ follows
from the smooth exponential law applied to
$(z,F,s)\mapsto F(s+z)$.  The semidirect-product construction then gives
$\widetilde G$.  Formula \eqref{eq:derived-translation-action} follows by
differentiating precomposition, and differentiating
$(K,w)\mapsto(F\alpha_z(K),z+w)$ at $(e_G,0)$ gives
\eqref{eq:left-translation-semidirect}.
\end{proof}

Set
\begin{equation}
  g:=(\widetilde\gamma,1)
  \in\mathbf L(\widetilde G).
  \label{eq:suspended-element}
\end{equation}

\begin{theorem}[A non-integrable Lie algebra element]
\label{thm:2.3}
There is no $C^1$ curve $U:[0,1]\to\widetilde G$ satisfying
\begin{equation}
  U(0)=e_{\widetilde G},
  \qquad
  U'(t)=T_{e_{\widetilde G}}L_{U(t)}g.
  \label{eq:suspended-autonomous-equation}
\end{equation}
In particular, $g$ generates no one-parameter subgroup of $\widetilde G$.
\end{theorem}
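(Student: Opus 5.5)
Suppose, aiming for a contradiction, that $U:[0,1]\to\widetilde G$ is a $C^1$ curve satisfying \eqref{eq:suspended-autonomous-equation}. We write $U(t)=(F_t,z(t))$, where $F_t\in G$ and $z(t)\in\mathbb R$. The second component of $\widetilde G$ is the group $\mathbb R$, and the projection $(F,z)\mapsto z$ is a smooth homomorphism. By \eqref{eq:left-translation-semidirect}, its derivative gives $z'(t)=1$ with $z(0)=0$, so $z(t)=t$. The first component $t\mapsto F_t$ is $C^1$ into $G$ because the product manifold structure on $\widetilde G=G\times\mathbb R$ is the one used in Proposition~\ref{prop:suspension-group}. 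By \eqref{eq:left-translation-semidirect} and \eqref{eq:derived-translation-action} it satisfies
\[
  \frac{d}{dt}F_t=T_{e_G}L_{F_t}\bigl(\mathbf L(\alpha_t)\widetilde\gamma\bigr),
  \qquad F_0=e_G,
\]
where $\mathbf L(\alpha_t)\widetilde\gamma$ is the loop $[s]\mapsto\widetilde\gamma([s+t])$.

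Next we apply the smooth homomorphism $\operatorname{ev}_{[0]}:G\to H$ from Proposition~\ref{prop:suspension-group}. Put $u(t):=F_t([0])$. Then $u$ is $C^1$ into $H$ with $u(0)=e_H$. A smooth homomorphism $\phi$ intertwines left translations: $T\phi\circ T_eL_F=T_eL_{\phi(F)}\circ\mathbf L(\phi)$. Since $\mathbf L(\operatorname{ev}_{[0]})X=X([0])$, the chain rule gives
\[
  u'(t)=T_{e_H}L_{u(t)}\,\widetilde\gamma([t])=T_{e_H}L_{u(t)}\gamma(t),
  \qquad 0\le t\le1,
\]
using $\widetilde\gamma([t])=\gamma(t)$ from Lemma~\ref{lem:periodic-bad-control}. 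So $u$ is an $H$-valued $C^1$ solution of \eqref{eq:left-bad-equation}, which Lemma~\ref{lem:periodic-bad-control} rules out. Hence no such $U$ exists.

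For the last assertion, suppose $g$ generated a one-parameter subgroup $\beta:\mathbb R\to\widetilde G$, meaning a smooth homomorphism with $\beta'(0)=g$. Then $\beta'(t)=T_eL_{\beta(t)}g$, obtained by differentiating $\beta(t+h)=\beta(t)\beta(h)$ in $h$. So $\beta|_{[0,1]}$ would be a curve excluded above, and $g$ generates no one-parameter subgroup.

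The only step needing care is the bookkeeping: the chain rule through the semidirect product and evaluation must produce exactly $T_eL_{u}\gamma(t)$, with the translation shift matching the convention $\alpha_z(F)([s])=F([s+z])$. In particular, $\mathbf L(\alpha_t)\widetilde\gamma$ evaluated at $[0]$ must give $\widetilde\gamma([t])$ and not $\widetilde\gamma([-t])$. This is guaranteed by \eqref{eq:derived-translation-action}. Had the sign come out reversed, one would instead evaluate along the reversed control, which fails to evolve for the same graded-escape reason.
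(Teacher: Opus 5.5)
Your proposal is correct and follows the paper's proof essentially step by step. Like the paper, you read off $z(t)=t$ from \eqref{eq:left-translation-semidirect}, push the $G$-component through the smooth homomorphism $\operatorname{ev}_{[0]}$ to produce an $H$-valued left evolution of $\gamma$ (contradicting Lemma~\ref{lem:periodic-bad-control}), and dispose of one-parameter subgroups by differentiating the homomorphism identity.
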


\begin{proof}
Suppose that $U(t)=(F_t,z_t)$ solves
\eqref{eq:suspended-autonomous-equation}.  By
\eqref{eq:left-translation-semidirect},
\[
  z_t'=1,
  \qquad
  F_t'=T_{e_G}L_{F_t}
  \bigl(\mathbf L(\alpha_{z_t})\widetilde\gamma\bigr),
  \qquad
  (F_0,z_0)=(e_G,0).
\]
Hence $z_t=t$.  Define $u(t):=F_t([0])$.  Since evaluation is a smooth
homomorphism, it intertwines left translations; therefore
\begin{align*}
  u'(t)
  &=T_{e_H}L_{u(t)}
    \left(\bigl(\mathbf L(\alpha_t)\widetilde\gamma\bigr)([0])\right)\\
  &=T_{e_H}L_{u(t)}\widetilde\gamma([t])
   =T_{e_H}L_{u(t)}\gamma(t),
  \qquad u(0)=e_H.
\end{align*}
This contradicts Lemma~\ref{lem:periodic-bad-control}.

If $g$ generated a smooth one-parameter subgroup $\sigma$, then
$\sigma(t+s)=\sigma(t)\sigma(s)$ would imply
\[
  \sigma'(t)=T_{e_{\widetilde G}}L_{\sigma(t)}\sigma'(0)
  =T_{e_{\widetilde G}}L_{\sigma(t)}g.
\]
Its restriction to $[0,1]$ would therefore solve
\eqref{eq:suspended-autonomous-equation}, a contradiction.
\end{proof}

\begin{proof}[Proof of Theorem~\ref{thm:nonexponentiable}]
Proposition~\ref{prop:suspension-group} shows that $\widetilde G$ has a
complete Hausdorff locally convex model.  By Theorem~\ref{thm:2.3}, the
element $g$ generates no one-parameter subgroup.  Hence no exponential
function can be defined on all of $\mathbf L(\widetilde G)$, since such a
function would make $t\mapsto\exp_{\widetilde G}(tg)$ the one-parameter
subgroup generated by $g$.
\end{proof}

\medskip
\noindent\textbf{Acknowledgments.}
The authors thank the DeepMath team for its agent support.  The initial ideas
and inspiration for this work came from the authors and were further
developed through discussions with DeepMath agents.  GPT models were
then used to carry out the constructions and generate the manuscript
text.  The resulting arguments were subjected to adversarial review in
multiple separate conversations.  
Finally, the authors have manually checked the manuscript and concluded it sound.

The authors present special thanks to Professor Karl-Hermann Neeb 
for his detailed comments on an earlier version.


\begin{thebibliography}{99}

\bibitem{GlocknerCIA2002}
H.~Gl\"ockner,
\emph{Algebras whose groups of units are Lie groups},
Studia Math. \textbf{153} (2002), no.~2, 147--177,
\href{https://doi.org/10.4064/sm153-2-4}{doi:10.4064/sm153-2-4}.

\bibitem{GlocknerSemiregularity}
H.~Gl\"ockner,
\emph{Regularity properties of infinite-dimensional Lie groups, and
semiregularity},
2012, \href{https://doi.org/10.48550/arXiv.1208.0715}{arXiv:1208.0715}.

\bibitem{GlocknerDirectLimits2021}
H.~Gl\"ockner,
\emph{Direct limits of regular Lie groups},
Math. Nachr. \textbf{294} (2021), no.~1, 74--81,
\href{https://doi.org/10.1002/mana.201900073}{doi:10.1002/mana.201900073}.

\bibitem{GlocknerNeebCIA}
H.~Gl\"ockner and K.-H.~Neeb,
\emph{When unit groups of continuous inverse algebras are regular Lie groups},
Studia Math. \textbf{211} (2012), no.~2, 95--109,
\href{https://doi.org/10.4064/sm211-2-1}{doi:10.4064/sm211-2-1}.

\bibitem{GlocknerNeebBook2026}
H.~Gl\"ockner and K.-H.~Neeb,
\emph{Infinite-Dimensional Lie Groups},
preliminary monograph, 2026,
\href{https://doi.org/10.48550/arXiv.2602.12362}{arXiv:2602.12362}.

\bibitem{Golod1964}
E.~S.~Golod,
\emph{On nil-algebras and finitely approximable $p$-groups},
Izv. Akad. Nauk SSSR Ser. Mat. \textbf{28} (1964), no.~2, 273--276;
English transl., Amer. Math. Soc. Transl. (2) \textbf{48} (1965), 103--106.

\bibitem{Milnor1984}
J.~Milnor,
\emph{Remarks on infinite-dimensional Lie groups},
in: Relativity, Groups and Topology II (Les Houches, 1983),
North-Holland, Amsterdam, 1984, pp.~1007--1057.

\bibitem{Neeb2006}
K.-H.~Neeb,
\emph{Towards a Lie theory of locally convex groups},
Japan. J. Math. \textbf{1} (2006), no.~2, 291--468,
\href{https://doi.org/10.1007/s11537-006-0606-y}{doi:10.1007/s11537-006-0606-y}.

\bibitem{NeebRusso2024}
K.-H.~Neeb and F.~G.~Russo,
\emph{Ground state representations of topological groups},
Math. Ann. \textbf{388} (2024), 615--674,
\href{https://doi.org/10.1007/s00208-022-02531-4}{doi:10.1007/s00208-022-02531-4}.

\bibitem{RegevRegev2009}
A.~Regev and A.~Regev,
\emph{The Golod--Shafarevich counter-example without Hilbert series},
in: Groups, Rings and Group Rings, Contemp. Math. \textbf{499},
Amer. Math. Soc., Providence, RI, 2009,
\href{https://doi.org/10.1090/conm/499/09808}{doi:10.1090/conm/499/09808}.

\bibitem{SchaeferWolff}
H.~H.~Schaefer and M.~P.~Wolff,
\emph{Topological Vector Spaces}, 2nd ed.,
Graduate Texts in Mathematics, vol.~3, Springer, New York, 1999.

\bibitem{NeebWagemann2007}
K.-H.~Neeb and F.~Wagemann,
\emph{Lie group structures on groups of smooth and holomorphic maps on
non-compact manifolds},
Geom. Dedicata \textbf{134} (2008), 17--60,
\href{https://doi.org/10.1007/s10711-008-9244-2}{doi:10.1007/s10711-008-9244-2}.

\end{thebibliography}
\end{document}